\newtheorem{thrm}{Theorem}[section]
\newtheorem{lemma}[thrm]{Lemma}
\newtheorem{prop}[thrm]{Proposition}
\newtheorem{cor}[thrm]{Corollary}
\newtheorem*{main}{Main Theorem}
\theoremstyle{definition}
\theoremstyle{remark}
\numberwithin{equation}{section}
\begin{document}

\bibliographystyle{plain}

\title[Projection operators on worm domains]
 {Regularity of projection operators attached to worm domains}

\author[D. E. Barrett, D. Ehsani, M. M. Peloso]
 {David E. Barrett, Dariush Ehsani, Marco M. Peloso}
\thanks{Work of the first author supported in part
by the National Science Foundation under Grant No. 1161735.}
\thanks{Work of the second author was (partially) supported by the Deutsche Forschungsgemeinschaft (DFG, German Research Foundation),
grant RU 1474/2 within DFG's Emmy Noether Programme.}

\address{Department of Mathematics\\
University of Michigan - Ann Arbor\\
2074 East Hall\\
Ann Arbor, Michigan  48109}
 \email{barrett@umich.edu}

\address{
Hochschule Merseburg\\
Eberhard-Leibnitz-Str. 2\\
 D-06217 Merseburg\\
 Germany}
 \email{dehsani.math@gmail.com}

\address{Dipartimento di Matematica\\
Universit\`a degli Studi di Milano\\
Via C. Saldini 50\\
I-20133 Milano} \email{marco.peloso@unimi.it}

%\address{Department of Mathematics, Texas A\&M University, College Station,
%Texas 77843-3368}
%\email{ehsani@math.tamu.edu}

\subjclass[2010]{Primary 32A25; Secondary 35B65, 32T20}

\begin{abstract}
 We construct a projection operator
on an unbounded worm domain which maps subspaces of $W^s$ to
themselves.  The subspaces are determined by a Fourier
decomposition of $W^s$ according to a rotational invariance of
 the worm domain.
\end{abstract}

\maketitle

\section*{Introduction}

Our work is on the non-smooth unbounded worm domains
\begin{equation*}
D_{\beta}=
 \{ (z_1,z_2)\in \mathbb{C}^2:\mbox{Re}\left(z_1 e^{-i\log
 z_2\overline{z}_2}\right) >0,
  |\log
 z_2\overline{z}_2|<\beta-\pi/2 \} \qquad \beta>\pi/2.
\end{equation*}
 On a bounded version of the
domains $D_{\beta}$, given by
\begin{equation*}
 \Omega_c = \left\{(z_1,z_2): \left| z_1+e^{i\log
 z_2\overline{z}_2}\right|^2 <1, |\log
 z_2\overline{z}_2|<\beta-\pi/2\right\},
\end{equation*}
C. Kiselman showed the failure of the Bergman projection to
 preserve $C^{\infty}(\overline{\Omega}_c)$ \cite{Ki91}.
The model domains, $D_{\beta}$, were important
 in \cite{Ba92}, where the first author used them to show the Diedrich-Forn\ae ss worm domains
  (constructed in \cite{DF77}) provide a counterexample
to regularity of the Bergman projection on a smoothly bounded
pseudoconvex domain. In a detailed analysis of the Bergman kernel,
Krantz and the third author, in \cite{KP081}, studied the $L^p$
mapping properties of the Bergman projection on $D_{\beta}$,
obtaining the exact range of values of $p$ for which the mapping
is bounded.

In this article we look at regularity in terms of Sobolev spaces.
  We denote by $W^s(D_{\beta})$ the space of functions whose
  derivatives of order $\le s$ are in $L^2(D_{\beta})$,
  {and by $W^s_{\mathscr{D}}(D_{\beta})$ the closure of $\mathscr{D}:=C^{\infty}_0(D_{\beta})$
   in $W^s(D_{\beta})$}.  The first
  author's results on smooth domains
  relied on the fact (proved in the same
  paper)
    that the Bergman projection on the model domain,
    $D_{\beta}$, fails to map
   $W^s_{\mathscr{D}}(D_{\beta})$  to $W^s(D_{\beta})$ for large enough $s$
  \cite{Ba92}. More precisely, the failure to preserve Sobolev
  spaces was proved on subspaces (defined as $W^s_j(D_{\beta})$
  below).  This was instrumental in proving that
 Condition $R$, in which for each $s\ge0$ there exists an $M\ge 0$
  such that the Bergman projection is bounded as a map from
   $W^{s+M}_{\mathscr{D}}(\Omega)$ to $W^{s}(\Omega)$ (\cite{BL80}),
   fails when $\Omega$ is the Diederich-Forn\ae ss worm domain
   \cite{Ch96}.  We point out that
     for a smoothly bounded pseudoconvex domain Condition $R$
    is equivalent to the apparently stronger condition
     in which the larger domain $W^{s+M}(\Omega)$ replaces
 $W^{s+M}_{\mathscr{D}}(\Omega)$.
 This equivalence also holds on the domains $D_{\beta}$:
  the first author constructed a composition of first order operators which allow
  us to consider the Bergman projection acting on functions which
  vanish to desired order at the boundary, without changing the
  resulting image of the projection (see Theorem 2.2 and specifically
  Theorem 3.1 in \cite{Ba95}).

  The question remained whether there exists another
  (oblique) projection operator which preserves the level of the
   Sobolev spaces.  We construct such an operator in the present article.

We now state our main
 result.
From the rotational invariance of $D_{\beta}$ with respect to the
rotations, $\rho_{\theta}(z)=(z_1,e^{i\theta}z_2)$, we can
decompose the Bergman space $B(D_{\beta})=L^2(D_{\beta}) \cap
\mathcal{O}(D_{\beta})$ by
\begin{equation*}
 B(D_{\beta})=\bigoplus_{j\in \mathbb{Z}} B_{j}(D_{\beta}),
\end{equation*}
where $B_{j}(D_{\beta})$ consists of functions $f\in B(D_{\beta})$
satisfying $ f \circ \rho_{\theta} \equiv e^{ij\theta} f$.  The
space $L^2(D_{\beta})$ admits a similar decomposition into
subspaces $L^2_j(D_{\beta})$, and we can define
 $W^s_j(D_{\beta})=L^2_j(D_{\beta}) \cap  W^s(D_{\beta})$.

 Our main theorem is grounded on adjustments to
  factors which imply the obstruction to regularity of the Bergman
  projection on worm domains.  The Bergman kernel for each space,
  $B_j(D_{\beta})$
  is explicitly calculated and expressed as an
  integral in the form:
\begin{align*}
 K_{j}(z,w)
=&
  \frac{1}{2\pi^2} z_2^j\overline{w}_2^j
   \int_{\mathbb{R}} \frac{(\xi-\frac{j+1}{2})\xi}
   {\sinh\left[(2\beta-\pi)(\xi-\frac{j+1}{2})\right] \sinh \pi\xi}
 z_1^{i\xi-1}\overline{w}_1^{-i\xi-1} d\xi
 ,
\end{align*}
where, with an abuse of notation, we write
\begin{equation*}
 z_1^{\alpha}=(z_1 e^{-i\log z_2 \overline{z}_2})^{\alpha}
  e^{i\alpha \log z_2 \overline{z}_2}.
\end{equation*}
Such a power of $z_1$ is holomorphic on $D_{\beta}$ as is easy to
see, and it is locally constant in $|z_2|$, but not constant if
$\alpha$ is not an integer and $\beta>\pi$.  In fact, in this
case, the fiber over $z_1$ is a union of disjoint annuli in $z_2$
and the function is constant on each such annulus, but not
globally constant.

 Using the residue calculus, one can compute an asymptotic
expansion of the kernel (see \cite{Ba92}).  The poles
corresponding to non-integer multiples of $i$ of the kernel lead
to non-integer powers of $z_1$ and $w_1$ which ultimately lead to
the obstruction of regularity of the operator.

We construct a kernel which, when added to the Bergman kernel,
eliminates all such poles, and in this way
  we successfully remove the
  obstruction to regularity of the Bergman projection on the model
  domains, $D_{\beta}$, and
  construct new projections which preserve the level of
   Sobolev spaces:
\begin{main}
\label{mainproj}
 Let $\beta>\pi/2$, and $D_{\beta}$ be defined as above.
  For all $ j\in \mathbb{Z}$ there exists a bounded linear
  projection
\begin{equation*}
{\bf T}_j: L^2(D_{\beta}) \rightarrow B_{j}(D_{\beta})
\end{equation*}
which satisfies
\begin{equation*}
{\bf T}_j: W^s_{\mathscr{D}}(D_{\beta}) \rightarrow
W^s_j(D_{\beta})
\end{equation*}
for every $s\ge0$.
\end{main}

Much of this paper was discussed at collaborative meetings made
possible through invitations extended by the University of
Michigan and the Universit\`a degli Studi di Milano.  All authors
gratefully acknowledge the support from these institutions.
  We also thank the referee for a careful reading of
 the article as well as for many helpful suggestions in addition to
   pointing out an error in the
 calculation of the adjoint to the tangential operator,
  $\Lambda_t$, in Section \ref{propproj} which led to our use of the
  $W^s_{\mathscr{D}}(D_{\beta})$ Sobolev spaces.

\section{The Bergman projection on $D_{\beta}$}
\label{berg}

Following \cite{Ba92}, we introduce the domains
\begin{equation*}
D_{\beta}'=
 \Big\{ (z_1,z_2)\in \mathbb{C}^2:|\mbox{Im }z_1-
 \log z_2\overline{z}_2| <\pi/2,
  |\log
 z_2\overline{z}_2|<\beta-\pi/2 \Big\}
\end{equation*}
to aid in our study of the Bergman kernels on $D_{\beta}$.
$D_{\beta}'$ is related to $D_{\beta}$ via the biholomorphic
mapping
\begin{align}
 \label{psidef}
&\Psi : D_{\beta}' \rightarrow D_{\beta}\\
 \nonumber
&(z_1,z_2) \mapsto (e^{z_1},z_2).
\end{align}

Let $K_{D_{\beta}}(z,w)$ be the Bergman kernel for $D_{\beta}$,
and $K_{j}(z,w)$ the reproducing kernel for $B_{j}(D_{\beta})$; we
have the relation
 \begin{equation*}
K_{D_{\beta}}(z,w)=\sum_j K_j(z,w).
\end{equation*}
 We calculate $K_{j}(z,w)$ using Fourier transforms as in
\cite{Ba92}.

Let $S_{\beta}$ denote the strip
\begin{equation*}
 S_{\beta}:=\{z=x+iy\in\mathbb{C}:|y|<\beta \},
\end{equation*}
and let $\omega_j(y)$ be the continuous bounded function on the
interval
 $I_{\beta}:=\{ y:|y|<\beta\}$,
given by
$\omega_j=\pi\left(e^{(j+1)(\cdot)}\chi_{\beta-\pi/2}\right)\ast\chi_{\pi/2}$,
where for $a>0$, $\chi_a:=\chi_{(-a,a)}$, the characteristic
function of the interval $(-a,a)$.  We denote by
$\|\cdot\|_{\omega_j}$ the $L^2(S_{\beta})$-norm weighted with the
function $\omega_j$:
\begin{equation*}
\|f\|_{\omega_j}:=
 \left(
 \int_{S_{\beta}} |f(x,y)|^2 \omega_j(y) dx dy
\right)^{1/2}.
\end{equation*}
 We further define the weighted
 Bergman spaces on the strip $S_{\beta}$ by
\begin{equation*}
 B_{\omega_j}= \{ f \mbox{ holomorphic on } S_{\beta}
  : \| f\|_{\omega_j}^2<\infty \}.
\end{equation*}

For $f\in B_{\omega_j}$,
\begin{equation*}
\hat{f}(\xi,y)=\int_{\mathbb{R}} f(x+iy)e^{-ix\xi} dx
\end{equation*}
satisfies
 \begin{equation}
\label{hatf}
  \hat{f}(\xi,y)=e^{-y\xi}\hat{f}_{\mathbb{R}}(\xi),
\end{equation}
 where $\hat{f}_{\mathbb{R}}(\xi) := \hat{f}(\xi,0)$.
 %
%\begin{align*}
%\overline{\hat{f}}(\xi,y)&=\int_{\mathbb{R}}f(x-iy)e^{ix\xi}dx\\
% &=e^{-y\xi}\overline{\hat{f}}_0(\xi)\\
% &=e^{-y\xi}\hat{f}_0(-\xi).
%\end{align*}

Here and throughout we use the notation for complex variables
\begin{align*}
& z_1=x+iy\\
&w_1=x'+iy'.
\end{align*}
Define
\begin{equation*}
k_{j}'(\xi,y,w_1)=
 \frac{1}{\hat{\omega}_j(-2i\xi)} e^{i\xi(y-\overline{w}_1)},
\end{equation*}
 where $\hat{\omega}_j$ refers to the Fourier-Laplace transform of
 $\omega_j$, and satisfies
\begin{equation}
\label{hato}
 \hat{\omega}_j(-2i\xi)
 =
  \pi \frac
   {\sinh\left[(2\beta-\pi)(\xi-\frac{j+1}{2})\right] \sinh \pi\xi}
   {(\xi-\frac{j+1}{2})\xi}.
\end{equation}
We note that $\hat{\omega}_j$ extends to an entire
 function.
We claim that $k_{j}'$ corresponds to the kernel for the
 orthogonal projection
 on $D_{\beta}'$
according to the following lemma:
\begin{lemma}
 Let $K_{j}'(z,w)$ denote the reproducing kernel of the space
  $B_j(D_{\beta}')$.  Then
\label{equiv}
\begin{align*}
 K_{j}'(z,w)
=&
  \frac{1}{2\pi^2} z_2^j\overline{w}_2^j
   \int_{\mathbb{R}} \frac{(\xi-\frac{j+1}{2})\xi}
   {\sinh\left[(2\beta-\pi)(\xi-\frac{j+1}{2})\right] \sinh \pi\xi}
 e^{i(z_1-\overline{w}_1)\xi} d\xi
 .
\end{align*}
\end{lemma}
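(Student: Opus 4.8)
The plan is to exploit the product structure of $D_\beta'$ in the variables $z_1$ (living in the strip $|\,\mathrm{Im}\,z_1 - \log z_2\bar z_2|<\pi/2$) and the annular variable $z_2$, together with the Fourier decomposition in $\xi$ dual to $\mathrm{Re}\,z_1$. First I would reduce to computing, for each fixed $j$, the reproducing kernel of the one-dimensional weighted Bergman space $B_{\omega_j}$ on the strip $S_\beta$. The rotational invariance $\rho_\theta$ forces any $f\in B_j(D_\beta')$ to have the form $f(z_1,z_2)=z_2^j\,g(z_1)$ where, after integrating out $|z_2|$ against the appropriate Jacobian and imposing the constraint $|\log z_2\bar z_2|<\beta-\pi/2$ tied to the admissible range of $\mathrm{Im}\,z_1$, the $L^2(D_\beta')$-norm of $f$ becomes (a constant times) $\|g\|_{\omega_j}^2$; this is precisely why the weight $\omega_j = \pi\bigl(e^{(j+1)(\cdot)}\chi_{\beta-\pi/2}\bigr)\ast\chi_{\pi/2}$ is manufactured the way it is — the convolution encodes the two constraints and the exponential factor $e^{(j+1)y}$ comes from the $|z_2|^{2j}$ weight written in the log-coordinate $y$. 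So it suffices to show that the reproducing kernel of $B_{\omega_j}$ is $\frac{1}{2\pi^2}\int_{\mathbb{R}} \hat\omega_j(-2i\xi)^{-1} e^{i(z_1-\overline{w}_1)\xi}\,d\xi$, and then multiply by $z_2^j\overline{w}_2^j$ and use the formula \eqref{hato} for $\hat\omega_j(-2i\xi)$.

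For the strip computation I would pass to the Fourier transform in $x=\mathrm{Re}\,z_1$. By \eqref{hatf}, a holomorphic $f$ on $S_\beta$ is determined by $\hat f_{\mathbb{R}}(\xi)$ through $\hat f(\xi,y)=e^{-y\xi}\hat f_{\mathbb{R}}(\xi)$, and by Plancherel in $x$ the weighted norm becomes
\begin{equation*}
\|f\|_{\omega_j}^2 = \frac{1}{2\pi}\int_{\mathbb{R}} |\hat f_{\mathbb{R}}(\xi)|^2 \Bigl(\int_{I_\beta} e^{-2y\xi}\omega_j(y)\,dy\Bigr) d\xi = \frac{1}{2\pi}\int_{\mathbb{R}} |\hat f_{\mathbb{R}}(\xi)|^2\, \hat\omega_j(-2i\xi)\,d\xi,
\end{equation*}
identifying $B_{\omega_j}$ isometrically (up to the factor $2\pi$) with $L^2(\mathbb{R}, \tfrac{1}{2\pi}\hat\omega_j(-2i\xi)\,d\xi)$ via $f\mapsto \hat f_{\mathbb{R}}$. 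On the transform side the point evaluation $f\mapsto f(w_1)=\frac{1}{2\pi}\int \hat f_{\mathbb{R}}(\xi) e^{i\xi w_1} d\xi$, rewritten using $e^{i\xi w_1} = e^{i\xi x'}e^{-\xi y'}$ and matching against the inner product, has Riesz representative whose transform is $\xi\mapsto \hat\omega_j(-2i\xi)^{-1} e^{-i\xi \overline{w}_1}$ (one must be careful that $e^{i\xi w_1}$ pairs with the conjugate to produce $e^{-i\xi\overline{w}_1}$, and that dividing by the weight is exactly the operation that turns a point-evaluation functional into an element of the weighted $L^2$). Inverting the Fourier transform gives the claimed kernel $K_j'$; absorbing the two factors of $2\pi$ (one from Plancherel, one from Fourier inversion) produces the $\frac{1}{2\pi^2}$.

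The main obstacle is justifying that this formal recipe genuinely yields the reproducing kernel: one must check that $\hat\omega_j(-2i\xi)$ is positive and bounded below by a positive multiple of a polynomial (from \eqref{hato}, the zeros of the numerator $\sinh$ factors at $\xi=\tfrac{j+1}{2}$ and $\xi=0$ are cancelled by the denominator, and the function is real-analytic, positive, with exponential growth), so that $\hat\omega_j(-2i\xi)^{-1} e^{-i\xi\overline{w}_1}$ is genuinely in the weighted $L^2$ space and the integral defining $K_j'$ converges — this needs the decay $\hat\omega_j(-2i\xi)^{-1}=O(e^{-(2\beta-\pi)|\xi|})$ as $|\xi|\to\infty$, which holds since $2\beta-\pi>0$. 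Secondly, one must verify that $f\mapsto \hat f_{\mathbb{R}}$ is surjective onto the weighted $L^2$ space (i.e. every such transform comes from an actual holomorphic $L^2$ function on the strip) so that the space is not larger than accounted for; this is a Paley–Wiener type statement for the strip with the given weight. Granting these analytic points, the reproducing property $\langle f, K_j'(\cdot,w)\rangle = f(w)$ is immediate from the Plancherel identification. Finally I would transport the result back through the isometry $f\mapsto z_2^j g$ to conclude the formula for $K_j'$ on $D_\beta'$ as stated.
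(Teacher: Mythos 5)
Your proposal is correct and follows essentially the same route as the paper: reduce to the weighted Bergman space $B_{\omega_j}$ on the strip via the isometry $g\mapsto z_2^j g$ (whose Jacobian computation is exactly what produces the weight $\omega_j$, with constant $1$, not just ``a constant''), and identify the reproducing kernel of $B_{\omega_j}$ on the Fourier side as $\hat\omega_j(-2i\xi)^{-1}e^{-i\xi\overline{w}_1}$. The only difference is that the paper imports the strip kernel wholesale from Barrett's 1992 paper, whereas you rederive it; your derivation is sound.

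Two small repairs. First, the decay you quote, $\hat\omega_j(-2i\xi)^{-1}=O(e^{-(2\beta-\pi)|\xi|})$, uses only the first $\sinh$ factor in \eqref{hato} and is not enough: to see that $\xi\mapsto\hat\omega_j(-2i\xi)^{-1}e^{-i\xi\overline{w}_1}$ lies in $L^2(\hat\omega_j(-2i\xi)\,d\xi)$ for every $w_1$ with $|\operatorname{Im}w_1|<\beta$ you need the full bound $\hat\omega_j(-2i\xi)^{-1}=O(|\xi|^2e^{-2\beta|\xi|})$, which comes from keeping the $\sinh\pi\xi$ factor as well. Second, the constant bookkeeping is off as narrated: the Plancherel $\tfrac1{2\pi}$ cancels against the $\tfrac1{2\pi}$ in the inversion formula for $f(w_1)$, and the final $\tfrac1{2\pi^2}$ is the remaining $\tfrac1{2\pi}$ from inverting $\hat K_{\mathbb{R}}$ times the $\tfrac1{\pi}$ coming from \eqref{hato} --- not a product of two factors of $2\pi$. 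Both are cosmetic; the argument and the stated formula are right.
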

\begin{proof}
Let $\Gamma:\mathcal{B}_1\rightarrow \mathcal{B}_2$ be a
surjective isometry of two Bergman spaces.
 Let $K_1(z,w)$ be the reproducing kernel of the space
 $\mathcal{B}_1$ and $K_2(z,w)$ the kernel for
 $\mathcal{B}_2$.
 Then
 \begin{equation}
 \label{trnsfrm}
 K_2(z,w)=\overline{\Gamma_{w}\overline{\Gamma_z K_1(z,w)}}.
 \end{equation}

We now apply (\ref{trnsfrm}) to the spaces
 $\mathcal{B}_1=B_{\omega_j}$ and
  $\mathcal{B}_2=B_j(D_{\beta}')$.
From \cite{Ba92}
\begin{equation*}
 K_j(z_1,w_1)=\frac{1}{2\pi}\int_{\mathbb{R}}
  k_j'(\xi,y,w_1) e^{ix\xi} d\xi
\end{equation*}
is the reproducing kernel for $B_{\omega_j}$, and
\begin{align*}
\Gamma:B_{\omega_j} &\rightarrow B_j(D_{\beta}')\\
f(z_1)& \mapsto z_2^j f(z_1)
\end{align*}
is the isometry between Bergman spaces.
 Thus, by (\ref{trnsfrm})
\begin{equation*}
K_j'(z,w)=  \frac{1}{2\pi} z_2^j\overline{w}_2^j \int_{\mathbb{R}}
 k_{j}'(\xi,y,w_1) e^{ix\xi} d\xi
\end{equation*}
from which the lemma follows.
\end{proof}

\section{Improving the Bergman projection}
\label{correct}

Crucial to the proof in \cite{Ba92} of the failure of the Bergman
projection to
 preserve $W^s(D_{\beta})$ is the existence of poles of
 $k_{j}'(\xi,y,w_1)$ in the $\xi$ variable whose imaginary part
 is a non-integer multiple of $i$.  We see from (\ref{hato})
that such poles of $k_{j}'(\xi,y,w_1)$ are due to the zeros of
 $\hat{\omega}_j(-2i\xi)$ at
 $(j+1)/2+ ik\pi/(2\beta-\pi)$ for $k$ a non-zero integer.
 In this section we deal
 with this obstruction by adding a correction term
  which
 eliminates such poles.

We assume initially that $j=-1$.  To keep the notation that
 integral operators are defined by integrating functions against
 conjugates of functions of two variables (the kernel), we will work with terms in the kernel
  coming from $\hat{\omega}_j(2i\xi)$, observing that
 $\overline{\hat{\omega}_j(-2i\xi)}=\hat{\omega}_j(2i\xi)$.
 The goal in this section then is to find
a function, denoted by $\hat{h}(\xi,y)$, defined in
$\mathbb{C}\times I_{\beta}$ such that
 $\hat{h}(\xi,y)$,
 cancels the poles  of the function
\begin{equation}
 \label{bergpole}
\frac{1}{\hat{\omega}_{-1}(2i\xi)} e^{-\xi y}
\end{equation}
at $\xi=ik \nu_{\beta}$, for $k$ a non-zero integer, and
$\nu_{\beta}=\pi/(2\beta - \pi)$.  The function
 $\overline{\hat{h}(\xi,y)}$ will have
an inverse transform which is orthogonal to $B_{\omega_{-1}}$
 and satisfy
certain $L^2$ estimates which will be used in Section
\ref{mapping} to construct an integral operator.

% We assume for the time being $\pi/2 <
%\beta < \pi$.

 To ease notation
 we set
\begin{equation*}
\tau_{k}(\xi)=
(-1)^{k}\frac{e^{-k^2\nu_{\beta}^2}}{(2\beta-\pi)\pi}
 \frac{\xi^2}
 {\sinh(\pi \xi)}e^{-\xi^2}
\qquad k\in \mathbb{Z}.
\end{equation*}
We define
\begin{equation}
\label{hjk} \hat{h}_{k}(\xi,y)=
 \frac{\tau_{k}(\xi)e^{(\xi-2ik\nu_{\beta})y}}
 {\xi-ik\nu_{\beta}}.
\end{equation}

We note that the pole of (\ref{bergpole}) at $\xi=ik \nu_{\beta}$,
for $k$ a non-zero integer is the same as
 the pole of
  $\hat{h}_k$.
Our aim is to sum $\hat{h}_{k}$ over $k$ in order to
 produce a function which will be used to
  eliminate all such poles
   of
  (\ref{bergpole}).  The following proposition shows that
  we can sum over $k$.

To keep track of the poles, we introduce the set $P$ of all poles:
\begin{equation*}
P:=\{ ik\nu_{\beta} :k\ne0\}\cup \{ ik :k\ne0\}.
\end{equation*}

\begin{prop} \label{hj}
 Let $\hat{h}_{k}(\xi,y)$ be defined as above.
The infinite sum
\begin{equation*}
\sum_{k\ne 0} \hat{h}_{k}(\xi,\cdot)
\end{equation*}
converges in $L^{\infty}(I_{\beta})$ to a function
 $\hat{h}(\xi,\cdot)$ uniformly in $\xi$ on compact subsets of
 $\mathbb{C}\setminus P
 $.

Let $B_r=\cup B(ik\nu_{\beta};r)$ denote the union of balls
centered at elements of $P$ for some fixed radius $r>0$.  Let $U$
be any neighborhood of $P$ containing $B_r$.
 Then on $\mathbb{C}\setminus U \times I_{\beta}$
\begin{equation}
 \label{hunif}
|\hat{h}(\xi,y)|
 \lesssim
 |\xi|^{2}e^{- Re \xi^2}e^{(\beta-\pi)|Re\xi|},
\end{equation}
with the constant of inequality depending only on $U$.

\end{prop}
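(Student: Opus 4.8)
The plan is to deduce both assertions from a single term-by-term estimate on the summands $\hat h_k$, the crucial feature being that the Gaussian factor $e^{-k^2\nu_\beta^2}$ deliberately inserted into $\tau_k$ makes the series over $k$ converge unconditionally. Write $\xi=a+ib$. Since $k$, $\nu_\beta$ and $y$ are real, $|e^{(\xi-2ik\nu_\beta)y}|=e^{ay}\le e^{\beta|a|}$ for $y\in I_\beta$, so from (\ref{hjk}) and the definition of $\tau_k$ one reads off
\[
\|\hat h_k(\xi,\cdot)\|_{L^\infty(I_\beta)}\ \le\ \frac{e^{-k^2\nu_\beta^2}}{(2\beta-\pi)\pi}\cdot\frac{|\xi|^2}{|\sinh\pi\xi|}\cdot e^{-\mathrm{Re}\,\xi^2}\cdot e^{\beta|\mathrm{Re}\,\xi|}\cdot\frac{1}{|\xi-ik\nu_\beta|},
\]
in which only the first and the last factor depend on $k$.

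For the convergence statement, fix a compact set $L\subset\mathbb{C}\setminus P$ and put $\delta=\mathrm{dist}(L,P)>0$. Then $|\xi-ik\nu_\beta|\ge\delta$ for all $k\ne0$ and $\xi\in L$, while $|\xi|$, $e^{\beta|\mathrm{Re}\,\xi|}$, $e^{-\mathrm{Re}\,\xi^2}$ and the product $|\xi|^2/|\sinh\pi\xi|$ are all bounded on $L$ (the nonzero zeros of $\sinh$ lie in $P$, and at $\xi=0$ the factor $|\xi|^2$ absorbs the zero of $\sinh$). Hence $\|\hat h_k(\xi,\cdot)\|_{L^\infty(I_\beta)}\le C_L\,e^{-k^2\nu_\beta^2}$ uniformly in $\xi\in L$, and $\sum_{k\ne0}C_L e^{-k^2\nu_\beta^2}<\infty$; the Weierstrass $M$-test then shows that the partial sums are uniformly Cauchy in $L^\infty(I_\beta)$ over $\xi\in L$, which is precisely the claimed convergence to $\hat h(\xi,\cdot)$. (As a by-product, $\hat h(\xi,\cdot)$ is holomorphic in $\xi$ on $\mathbb{C}\setminus P$, being a locally uniform limit of holomorphic functions.)

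For the quantitative estimate (\ref{hunif}) I would sum the displayed bound over $k\ne0$ and separate the $k$-dependence:
\[
|\hat h(\xi,y)|\ \le\ \Bigl(\frac{|\xi|^2\,e^{\beta|\mathrm{Re}\,\xi|}}{|\sinh\pi\xi|}\Bigr)\,e^{-\mathrm{Re}\,\xi^2}\cdot\frac{1}{(2\beta-\pi)\pi}\sum_{k\ne0}\frac{e^{-k^2\nu_\beta^2}}{|\xi-ik\nu_\beta|}.
\]
If $\xi\notin U$ then $\xi\notin B_r$, so $|\xi-ik\nu_\beta|\ge r$ for every $k\ne0$ and the last sum is at most $\tfrac1r\sum_{k\ne0}e^{-k^2\nu_\beta^2}$, a finite constant. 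It then remains to absorb the parenthesized factor, i.e.\ to show that $e^{\pi|\mathrm{Re}\,\xi|}/|\sinh\pi\xi|\lesssim1$ on $\mathbb{C}\setminus U$. Using $|\sinh\pi\xi|^2=\sinh^2(\pi\,\mathrm{Re}\,\xi)+\sin^2(\pi\,\mathrm{Im}\,\xi)$ one gets $|\sinh\pi\xi|\ge\sinh(\pi|\mathrm{Re}\,\xi|)$, so this ratio is $\le 2/(1-e^{-2\pi|\mathrm{Re}\,\xi|})$, which is bounded once $|\mathrm{Re}\,\xi|\ge\varepsilon_0$ for any fixed $\varepsilon_0>0$; on the complementary strip $|\mathrm{Re}\,\xi|<\varepsilon_0$ the numerator is bounded, and since $\mathbb{C}\setminus U$ stays a positive distance from $\{ik\nu_\beta:k\in\mathbb{Z}\}$ (by $B_r\subseteq U$) and from the remaining zeros of $\sinh$ (by $U\supseteq$ a neighborhood of $P$), the factor $|\xi|^2/|\sinh\pi\xi|$ is bounded there by a constant depending only on $U$. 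Assembling the pieces gives (\ref{hunif}), with the constant depending only on $U$.

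I expect the main obstacle to be this uniformity rather than any deep difficulty. Away from a bounded set the estimate essentially runs itself: $e^{-\mathrm{Re}\,\xi^2}=e^{(\mathrm{Im}\,\xi)^2-(\mathrm{Re}\,\xi)^2}$ together with the elementary bound $|\sinh\pi\xi|\gtrsim e^{\pi|\mathrm{Re}\,\xi|}$ dominates even the large-modulus poles of $\hat h$ coming from $\sinh$, while the poles at $ik\nu_\beta$ are kept at distance $\ge r$ for free via $B_r\subseteq U$. Inside a bounded region, however, one must (i) use that $\mathbb{C}\setminus U$ avoids all the zeros of $\sinh$ — this is exactly what the hypotheses on $U$ provide — and (ii) never split $|\xi|^2$ from $|\sinh\pi\xi|^{-1}$ near $\xi=0$, since their product stays bounded there (reflecting the simple zero of $\tau_k$, hence of $\hat h$, at the origin) whereas the individual factors do not. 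Once the bookkeeping is organized around the summable weight $e^{-k^2\nu_\beta^2}$, nothing beyond these elementary estimates is required.
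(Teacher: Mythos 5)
Your argument is the same as the paper's: both bound each summand by the common factor $|\tau_k(\xi)|/|\xi-ik\nu_\beta|$, use the Gaussian weight $e^{-k^2\nu_\beta^2}$ built into $\tau_k$ for summability in $k$, the bound $|e^{(\xi-2ik\nu_\beta)y}|\le e^{\beta|\mathrm{Re}\,\xi|}$ on $I_\beta$, the separation $|\xi-ik\nu_\beta|\ge r$ off $U$, and a lower bound on $|\sinh\pi\xi|$; your write-up is if anything more careful (explicit $M$-test, explicit use of $|\sinh\pi\xi|^2=\sinh^2(\pi\,\mathrm{Re}\,\xi)+\sin^2(\pi\,\mathrm{Im}\,\xi)$, and $1/|\xi-ik\nu_\beta|\le 1/r$ in place of the paper's looser $1/k$).

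One step as written is false and needs repair: on the strip $|\mathrm{Re}\,\xi|<\varepsilon_0$ the quantity $|\xi|^2/|\sinh\pi\xi|$ is \emph{not} bounded on $\mathbb{C}\setminus U$, since at $\xi=i(k+\tfrac12)$ one has $|\sinh\pi\xi|=1$ while $|\xi|^2\to\infty$. You do not need that claim, though: what must be shown on the strip is $e^{\pi|\mathrm{Re}\,\xi|}/|\sinh\pi\xi|\lesssim1$ with the factor $|\xi|^2$ left intact on both sides. Since $\sinh(\pi\,\cdot)$ is $2i$-periodic, $|\sinh\pi\xi|$ is bounded below on $\{|\mathrm{Re}\,\xi|\le\varepsilon_0\}\setminus\bigcup_{k\in\mathbb{Z}}B(ik,\rho)$ by compactness, and the nonzero $ik$ are excluded by $U\supseteq B_r$; this covers the whole strip except a neighborhood of the origin, where one keeps the product $|\xi|^2/|\sinh\pi\xi|\approx|\xi|/\pi$ as you indicate. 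Note, however, that near $\xi=0$ this yields only $|\hat h(\xi,y)|\lesssim|\xi|$, not the $|\xi|^2$ appearing on the right of \eqref{hunif}; this defect is inherited from the statement and is equally present in the paper's own proof (which applies $1/|\sinh\pi\xi|\lesssim e^{-\pi|\mathrm{Re}\,\xi|}$ globally, also invalid at the origin), and it is harmless for the later applications, which only use boundedness of $|\zeta|^{4}e^{-2\,\mathrm{Re}\,\zeta^2}e^{2(\beta-\pi)|\mathrm{Re}\,\zeta|}\hat\omega_{-1}(-2i\zeta)$.
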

\begin{proof}
\begin{equation*}
\sum_{k\ne 0} \hat{h}_{k}(\xi,y)=\sum_{k\ne 0}
\frac{\tau_{k}(\xi)e^{(\xi-2ik\nu_{\beta})y}}
 {\xi-ik\nu_{\beta}}
\end{equation*}
is a sum of terms of the form
\begin{equation*}
e^{\xi y} \sum_{k\ne0}
%%note k=0 for the term in \phi_k22
 a_{k}(\xi)
\end{equation*}
where
\begin{equation*}
|a_{k}(\xi)|\lesssim \frac{1}{k}e^{-k^2\nu_{\beta}^2}
 |\xi|^{2}e^{- Re\xi^2} e^{-\pi|Re\xi|} \qquad
k \ne 0.
\end{equation*}

Inequality (\ref{hunif}) is then a consequence of
 \begin{align*}
 \nonumber
|\hat{h}(\xi,y)|&=
 \left| e^{\xi y}
\sum_{k}
 a_{k}(\xi) e^{-2ik\nu_{\beta} y}
 \right|\\
 \nonumber &\lesssim
  e^{ \beta|Re\xi|}
 \sum_{k}
 |a_{k}(\xi)|
 \\
% \label{hunif}
&\lesssim
 |\xi|^{2}e^{- Re \xi^2}e^{(\beta-\pi)|Re\xi|}.
 \end{align*}
\end{proof}

 We note for $f\in B_{\omega_{-1}}$:
\begin{align*}
\int_{\mathbb{R}}\int_{I_{\beta}}
  \overline{\hat{h}_{k}(\xi,y)}   \hat{f}(\xi,y)\omega_{-1}(y)dy
  d\xi&=\int_{\mathbb{R}}\int_{I_{\beta}}
  \overline{\hat{h}_{k}(\xi,y)}  e^{-y\xi}\hat{f}_{\mathbb{R}}(\xi)
   \omega_{-1}(y) dy
 d\xi\\
 & =
\int_{\mathbb{R}}\frac{\tau_{k}(\xi)}
 {\xi+ik\nu_{\beta}}\hat{f}_{\mathbb{R}}(\xi)
 \left[
  \int_{I_{\beta}} e^{2ik\nu_{\beta}y} \omega_{-1}(y)
    dy \right]
 d\xi\\
 &=0
 ,
\end{align*}
 where we use the representation of $f$ in (\ref{hatf})
  in the first step, and the fact that
$\int_{I_{\beta}} e^{2ik\nu_{\beta}y} \omega_{-1}(y)
    dy=\hat{\omega}_{-1}(-2k\nu_{\beta})=0$ in the last.

We collect the essential properties, which follow directly from
the above, of the kernel function
 $h(x,y)$ in the following theorem:
\begin{thrm}
 \label{proph}
There exists $h(x,y) \in L^2_{\omega_{-1}}(S_{\beta})$ with the
following properties:
\\
$(i)$  For each $y\in I_{\beta}$, the poles of
\begin{equation*}
\hat{h}(\xi,y)+\frac{1}{\hat{\omega}_{-1}(2i\xi)} e^{-\xi y}
\end{equation*}
with respect to $\xi$ lie at only integer multiples of
 $i$.
\\
$(ii)$  The kernel given by
\begin{equation*}
\mathcal{H}'(z,w)=\frac{1}{2\pi}
 \frac{1}{z_2 \overline{w}_2}
 \int_{\mathbb{R}}
   \overline{\hat{h}(\xi,y)}e^{i( x-
    \overline{w}_1)\xi} d\xi
\end{equation*}
is orthogonal to the space  $B_{-1}(D_{\beta}')$ in the sense that
 $\mathcal{H}'(\cdot,w) \perp B_{-1}(D_{\beta}')$.
\\
$(iii)$  Let $U$ be any neighborhood of $P$ containing $B_r$ for
some $r>0$.
 Then on $\mathbb{C}\setminus U \times I_{\beta}$
\begin{equation*}
 %\label{hunif}
|\hat{h}(\xi,y)|
 \lesssim
 |\xi|^{2}e^{- Re \xi^2}e^{(\beta-\pi)|Re\xi|},
\end{equation*}
with the constant of inequality depending only on $U$.
\end{thrm}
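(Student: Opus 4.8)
The plan is to take $h$ to be the inverse Fourier transform in the first variable of the function $\hat h(\xi,\cdot)=\sum_{k\neq 0}\hat h_{k}(\xi,\cdot)$ produced by Proposition~\ref{hj}, i.e.\ to set
\[
h(x,y)=\frac{1}{2\pi}\int_{\mathbb{R}}\hat h(\xi,y)\,e^{ix\xi}\,d\xi,\qquad (x,y)\in S_{\beta},
\]
and then to read the three conclusions off of what has already been assembled. Conclusion~$(iii)$ is literally inequality~\eqref{hunif}. For the membership $h\in L^{2}_{\omega_{-1}}(S_{\beta})$ — and for the consistency of the notation $\hat h$ — I would note that $P$ consists of nonzero purely imaginary points, so $\mathbb{R}$ lies at a positive distance from $P$; applying~\eqref{hunif} with a neighbourhood $U\supset P$ disjoint from $\mathbb{R}$ gives $|\hat h(\xi,y)|\lesssim|\xi|^{2}e^{-\xi^{2}}e^{(\beta-\pi)|\xi|}$ for $\xi\in\mathbb{R}$ and all $y\in I_{\beta}$, which is rapidly decreasing in $\xi$, uniformly in $y$. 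Since $\omega_{-1}$ is bounded and $I_{\beta}$ has finite length, Plancherel in $x$ gives $\int_{S_{\beta}}|h|^{2}\omega_{-1}=\frac{1}{2\pi}\int_{I_{\beta}}\int_{\mathbb{R}}|\hat h(\xi,y)|^{2}\,d\xi\,\omega_{-1}(y)\,dy<\infty$ and simultaneously identifies $\hat h(\cdot,y)$ with the Fourier transform of $h(\cdot,y)$.

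For~$(i)$, I would first record that $\hat{\omega}_{-1}(2i\xi)=\pi\,\sinh[(2\beta-\pi)\xi]\sinh(\pi\xi)/\xi^{2}$ by~\eqref{hato}, so that its only zeros are the (generically simple) zeros of $\sinh[(2\beta-\pi)\xi]$ at the points $ik\nu_{\beta}$ and of $\sinh(\pi\xi)$ at the points $ik$, $k\neq 0$ — the factor $\xi^{2}$ cancelling the zero at the origin. Hence the poles of~\eqref{bergpole} sit exactly at those points; the ones at integer multiples of $i$ are permitted, so what must disappear are the poles at $ik\nu_{\beta}\notin i\mathbb{Z}$. By Proposition~\ref{hj}, $\hat h(\cdot,y)$ is holomorphic on $\mathbb{C}\setminus P$, with at most a simple pole at $ik\nu_{\beta}$, which comes solely from the single summand $\hat h_{k}$. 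It then remains to check the residue identity at $\xi=ik\nu_{\beta}$: the residue of $\hat h_{k}(\xi,y)=\tau_{k}(\xi)e^{(\xi-2ik\nu_{\beta})y}/(\xi-ik\nu_{\beta})$ is $\tau_{k}(ik\nu_{\beta})e^{-ik\nu_{\beta}y}$, while that of~\eqref{bergpole} is obtained from the above expression for $\hat{\omega}_{-1}(2i\xi)$ by l'H\^opital's rule using $\sinh(ik\pi)=0$; and the normalisation of $\tau_{k}$ — in particular the sign $(-1)^{k}$ and the Gaussian $e^{-\xi^{2}}$, whose value $e^{k^{2}\nu_{\beta}^{2}}$ at $\xi=ik\nu_{\beta}$ cancels the prefactor $e^{-k^{2}\nu_{\beta}^{2}}$ — is exactly tailored so that the pole of $\hat h_{k}$ at $ik\nu_{\beta}$ cancels that of~\eqref{bergpole}. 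Hence the only poles of $\hat h(\xi,y)+\frac{1}{\hat{\omega}_{-1}(2i\xi)}e^{-\xi y}$ lie at integer multiples of $i$, which is~$(i)$.

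For~$(ii)$, the computation displayed just before the statement already establishes that $\int_{\mathbb{R}}\int_{I_{\beta}}\overline{\hat h_{k}(\xi,y)}\,\hat f(\xi,y)\,\omega_{-1}(y)\,dy\,d\xi=0$ for every $f\in B_{\omega_{-1}}$ and every $k\neq 0$, the vanishing being that of $\int_{I_{\beta}}e^{\pm 2ik\nu_{\beta}y}\omega_{-1}(y)\,dy=\hat{\omega}_{-1}(\mp 2k\nu_{\beta})=0$. Summing over $k$ — the interchange of the sum with the integrals being justified by the $L^{\infty}(I_{\beta})$-convergence in Proposition~\ref{hj} together with the rapid decay in $\xi$ of $\hat h(\xi,y)$ and of $\hat f(\xi,y)=e^{-y\xi}\hat f_{\mathbb{R}}(\xi)$ — and then applying Plancherel in $x$ yields $\langle f,h\rangle_{\omega_{-1}}=0$, i.e.\ $h\perp B_{\omega_{-1}}$ in $L^{2}_{\omega_{-1}}(S_{\beta})$. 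Finally I would transfer this to $D_{\beta}'$ through the isometry $\Gamma\colon B_{\omega_{-1}}\to B_{-1}(D_{\beta}')$, $f\mapsto z_{2}^{-1}f(z_{1})$, exactly as in the proof of Lemma~\ref{equiv}: for $F\in B_{-1}(D_{\beta}')$, integrating the $z_{2}$-variable out of $\langle F,\mathcal{H}'(\cdot,w)\rangle_{L^{2}(D_{\beta}')}$ produces precisely the weight $\omega_{-1}$ (this is the computation behind $\Gamma$ and the definition of the $\omega_{j}$), so that this inner product reduces to an expression of the type just shown to vanish, whence $\mathcal{H}'(\cdot,w)\perp B_{-1}(D_{\beta}')$.

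I expect the residue identity in~$(i)$ to be the real crux: it is the one place where the exact constants in the definition of $\tau_{k}$ are used, and where a sign error would be fatal. The transfer in~$(ii)$ is the other point requiring care — one must justify the interchange of summation and integration and keep the various conjugations straight in moving between the weighted strip $S_{\beta}$ and $D_{\beta}'$ — but this follows the pattern already set by Lemma~\ref{equiv} and by Sections~\ref{berg} and~\ref{correct}, so I do not expect it to demand any new ingredient.
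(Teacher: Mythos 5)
Your route is the one the paper intends: the theorem is presented as a list of ``properties which follow directly from the above,'' and you assemble them the same way --- $(iii)$ is \eqref{hunif}, the membership $h\in L^{2}_{\omega_{-1}}(S_{\beta})$ follows from the Gaussian decay of $\hat h$ on $\mathbb{R}$ (which is at positive distance from $P$), $(ii)$ is the displayed orthogonality computation summed over $k$ and transported through the isometry $\Gamma$, and $(i)$ reduces to a residue identity at each $ik\nu_{\beta}$. The gap is precisely at the step you flag as the crux but do not carry out. With $\hat{\omega}_{-1}(2i\xi)=\pi\sinh[(2\beta-\pi)\xi]\sinh(\pi\xi)/\xi^{2}$ and $\xi_{0}=ik\nu_{\beta}$ (so $(2\beta-\pi)\xi_{0}=ik\pi$ and $\cosh(ik\pi)=(-1)^{k}$), one finds
\begin{equation*}
\operatorname{Res}_{\xi=\xi_{0}}\ \frac{e^{-\xi y}}{\hat{\omega}_{-1}(2i\xi)}
=\frac{(-1)^{k}\,\xi_{0}^{2}}{\pi(2\beta-\pi)\sinh(\pi\xi_{0})}\,e^{-ik\nu_{\beta}y},
\end{equation*}
while $\operatorname{Res}_{\xi=\xi_{0}}\hat{h}_{k}(\xi,y)=\tau_{k}(\xi_{0})\,e^{-ik\nu_{\beta}y}$ and, since $e^{-\xi_{0}^{2}}=e^{k^{2}\nu_{\beta}^{2}}$ exactly cancels the prefactor $e^{-k^{2}\nu_{\beta}^{2}}$, $\tau_{k}(\xi_{0})=\frac{(-1)^{k}\xi_{0}^{2}}{\pi(2\beta-\pi)\sinh(\pi\xi_{0})}$. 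The two residues are therefore \emph{equal}, not opposite: with $\tau_{k}$ exactly as defined, the sum $\hat{h}(\xi,y)+\hat{\omega}_{-1}(2i\xi)^{-1}e^{-\xi y}$ doubles the principal part at $ik\nu_{\beta}$ instead of removing it, so $(i)$ fails for the $h$ you (and the paper) construct.

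The defect is only a sign --- replacing $(-1)^{k}$ by $(-1)^{k+1}$ in $\tau_{k}$, equivalently taking $-\hat h$, repairs $(i)$ without disturbing $(ii)$, $(iii)$, or the $L^{2}$ membership, since those use only $|\tau_{k}|$ and the vanishing $\hat{\omega}_{-1}(\mp 2k\nu_{\beta})=0$. But a proof of $(i)$ must actually exhibit this computation and settle the sign; asserting that the normalisation of $\tau_{k}$ is ``exactly tailored'' leaves unproved (and, with the stated constants, false) the one step on which the whole construction rests. The remainder of your argument --- the interchange of $\sum_{k}$ with the integrals in $(ii)$ justified by the $L^{\infty}(I_{\beta})$ convergence of Proposition~\ref{hj} together with the decay in $\xi$, and the transfer to $D_{\beta}'$ via $\Gamma$ as in Lemma~\ref{equiv} --- is correct and matches the paper.
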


We also denote the horizontal lines
\begin{equation*}
S_t=\{ \mathbb{R}+it\}
\end{equation*}
for $t\in \mathbb{R}$. From the Theorem \ref{proph} $iii)$, we
have in particular,
  on any given $S_t$
 such that $S_t\cap P = \emptyset$,  $\hat{h}(\xi,y)$
satisfies the following estimates
 uniformly, i.e. with constant of inequality independent of $\xi$:
\begin{equation}
\label{hjest}
 \int_{I_{\beta}}
  \left| \hat{h}(\xi,y)\right|^2
  %\omega_{-1}(y)
  dy \lesssim
  |\xi|^{4}e^{-2 Re \xi^2}e^{2(\beta-\pi)|Re\xi|}.
\end{equation}

\section{Mapping properties}
 \label{mapping}
We begin this section with some integral estimates for our
constructed correction term.   We let $\mathcal{H}'(z,w)$ be as in
Theorem \ref{proph}.  Due to the $\overline{z}_2^{-1}$ factor in
 $\overline{\mathcal{H}'(z,w)}$, the operator determined by the
 kernel, $\mathcal{H}'(z,w)$, will have its action restricted to
 the $L^2_{-1}(D_{\beta}')$ component of a given function in
  $L^2(D_{\beta}')$

We use the equivalence between Bergman spaces given in Lemma
\ref{equiv} in the proof of the next proposition: for $G\in
B_{-1}(D_{\beta}')$, $G$ is of the form $G=g(z_1) z_2^{-1}$, where
$g\in B_{\omega_{-1}}$, and $\|G\|_{B_{-1}(D_{\beta}')}=
\|g\|_{B_{\omega_{-1}}}$.
\begin{prop}
\label{mappingprop}
 Let $\beta>\pi/2$, and ${\bf H}'$ be the integral operator
\begin{equation*}
%\label{int-oper-H'}
{\bf H'} f(w) = \int_{D'_\beta} f(z) \overline{\mathcal{H}'(z,w)}
    dV(z) ,
\end{equation*}
where
\begin{equation*}
\mathcal{H}'(z,w)= \frac{1}{2\pi}
 \frac{1}{z_2\overline{w}_2}
 \int_{\mathbb{R}}
   \overline{\hat{h}(\xi,y)}e^{i( x-
    \overline{w}_1)\xi} d\xi.
\end{equation*}
 Then
\begin{equation*}
{\bf H'} : L^2(D_{\beta}') \rightarrow B_{-1}(D_{\beta}'),
\end{equation*}
and
\begin{equation*}
\| {\bf H}' f\|_{B_{-1}(D_{\beta}')}
 \lesssim \|f\|_{L_{-1}^2(D_{\beta}')}.
\end{equation*}
\end{prop}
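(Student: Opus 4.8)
The plan is to use the Fourier-analytic picture established in Lemma~\ref{equiv} and Theorem~\ref{proph}, transporting the problem to the weighted Bergman space $B_{\omega_{-1}}$ on the strip $S_\beta$ via the isometry $\Gamma\colon g\mapsto z_2^{-1}g$. First I would observe that, because of the $1/(z_2\overline w_2)$ factor and the fact that integration in $\theta$ kills all Fourier components except $j=-1$, the operator ${\bf H}'$ annihilates $L^2_j(D_\beta')$ for $j\ne -1$ and acts on $L^2_{-1}(D_\beta')$; this already yields the estimate in terms of $\|f\|_{L^2_{-1}(D_\beta')}$ rather than the full $L^2$ norm. So it suffices to analyze ${\bf H}'$ on the $j=-1$ component, where writing $f = f_0(z_1)z_2^{-1}$ (with $f_0$ not necessarily holomorphic) reduces everything to a one-complex-variable operator on the strip with kernel built from $\overline{\hat h(\xi,y)}$.

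Next I would compute $({\bf H}'f)(w)$ explicitly. Using the definition of $\mathcal{H}'$ and Fubini, the integral over $D_\beta'$ becomes an integral over $S_\beta$ against the weight $\omega_{-1}(y)$ (the $z_2$-integration produces exactly this weight, by the same computation that defines $\omega_j$ as a convolution of characteristic functions). One then recognizes, by Plancherel in the $x$-variable, that ${\bf H}'f$ is the holomorphic function on $S_\beta$ whose boundary Fourier data is a multiplier applied to $\widehat{(f_0)}$: schematically $\widehat{{\bf H}'f}_{\mathbb R}(\xi)$ equals $\overline{\hat h(\xi,0)}$ (or the appropriate contour-shifted version) times a pairing of $\hat f_0(\xi,\cdot)$ against $e^{-y\xi}$ weighted by $\omega_{-1}$. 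The point is that $\hat h(\xi,y)$ was designed precisely so that $\overline{\hat h(\xi,\cdot)}$ is, after multiplication by the weight, the Fourier transform of an $L^2_{\omega_{-1}}$ function, and the resulting output lies in $B_{\omega_{-1}}$; applying $\Gamma$ sends it back into $B_{-1}(D_\beta')$. Holomorphy of the output follows because the $\xi$-integral can be shifted into the region where $\hat h$ is holomorphic and decays, using estimate (iii) of Theorem~\ref{proph}.

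For the norm bound I would invoke Plancherel on the strip: $\|{\bf H}'f\|_{B_{\omega_{-1}}}^2$ is, up to constants, $\int_{\mathbb R} |\widehat{{\bf H}'f}_{\mathbb R}(\xi)|^2 \hat\omega_{-1}(-2i\xi)^{-1}\,d\xi$ (the Paley–Wiener description of $B_{\omega_{-1}}$ from \cite{Ba92}), and then estimate the multiplier. Here the key input is (\ref{hjest}): $\int_{I_\beta}|\hat h(\xi,y)|^2\,dy \lesssim |\xi|^4 e^{-2\operatorname{Re}\xi^2}e^{2(\beta-\pi)|\operatorname{Re}\xi|}$ on horizontal lines avoiding $P$, together with the growth $\hat\omega_{-1}(-2i\xi)^{-1}$ which behaves like $|\xi|^2 e^{-(2\beta-\pi)|\operatorname{Re}\xi|}/|\sinh|$-type factors from (\ref{hato}). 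A Cauchy–Schwarz estimate in $y$ against $\omega_{-1}$ bounds the pairing, and the super-exponential Gaussian decay $e^{-\operatorname{Re}\xi^2}$ in $\hat h$ dominates every exponential growth coming from the weight, giving a finite constant.

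The main obstacle I anticipate is the bookkeeping of contour shifts and the poles in $P$: $\hat h(\xi,y)$ and $\hat\omega_{-1}(-2i\xi)^{-1}$ each have poles, and to identify ${\bf H}'f$ as holomorphic and to apply Plancherel one must shift the $\xi$-contour off the real axis (or pass through the origin) without picking up unwanted residue contributions, all while keeping the estimates uniform. Relatedly, one must check carefully that the Fubini interchange is justified — this is where the Gaussian factor $e^{-\operatorname{Re}\xi^2}$ in $\hat h$, absent from the bare Bergman kernel, does the crucial work in making all the iterated integrals absolutely convergent. Once these analytic points are handled, the conclusion ${\bf H}'\colon L^2(D_\beta')\to B_{-1}(D_\beta')$ with the stated bound follows from assembling the pieces.
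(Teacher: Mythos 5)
Your proposal follows essentially the same route as the paper's proof: isolate the $j=-1$ component via orthogonality of powers of $z_2$, transport to the weighted space $B_{\omega_{-1}}$ on the strip, use the Paley--Wiener/Plancherel representation of the norm, apply Cauchy--Schwarz in $y$ against $\omega_{-1}$, and let the Gaussian factor $e^{-\operatorname{Re}\xi^2}$ in $\hat h$ absorb the exponential growth coming from the weight. Two minor corrections that do not affect the argument: the Plancherel identity for $B_{\omega_{-1}}$ carries the factor $\hat\omega_{-1}(-2i\xi)$ to the power $+1$ (not $-1$), so the growth to be beaten is of order $e^{2\beta|\operatorname{Re}\xi|}$, which the Gaussian still dominates; and no contour shifting is needed in this proposition, since the pole set $P$ is purely imaginary and the $\xi$-integration stays on the real axis throughout.
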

\begin{proof}
We write $D_{\beta}'=\mathbb{R}\times d_{\beta}'$,
 where
\begin{equation*}
d_{\beta}'=
 \{ (y,z_2)\in \mathbb{R}\times\mathbb{C}:|y-
 \log z_2\overline{z}_2| <\pi/2,
  |\log
 z_2\overline{z}_2|<\beta-\pi/2 \}.
\end{equation*}
 Then,
\begin{align*}
 {\bf H}' f(w)
 &= \frac{1}{2\pi}\frac{1}{w_2} \int_{D_{\beta}'}
\frac{1}{\overline{z}_2}
 \int_{\mathbb{R}}
   \hat{h}(\xi,y)e^{-i( x-
    w_1)\xi} d\xi f(z) dV(z) \\
 &=\frac{1}{2\pi}\frac{1}{ w_2} \int_{d_{\beta}'}
\frac{1}{\overline{z}_2}
 \int_{\mathbb{R}} \int_{\mathbb{R}}
   \hat{h}(\xi,y)e^{-i( x-
    w_1)\xi}f(x,y,z_2) dx d\xi dy dV(z_2)\\
    &=\frac{1}{2\pi}\frac{1}{ w_2} \int_{d_{\beta}'}
\frac{1}{\overline{z}_2}
 \int_{\mathbb{R}}
   \hat{h}(\xi,y)e^{i w_1\xi}
   \hat{f}(\xi,y,z_2) d\xi dy dV(z_2).
\end{align*}

We use a decomposition of $f$ according to
 \begin{equation*}
 f(z)=\sum_{j}f_j(z), \qquad f_{j}(z)\in L^2_{j}(D_{\beta}').
\end{equation*}
 Using the orthogonality of powers of $z_2$ (over circular
 regions) we can isolate any
  $f_j$ by integrating through $\overline{z}_2^j$.
This is used in the third step below where after integrating over
$z_2$ only $f_{-1}(z)$ terms remain:
\begin{align*}
 &\|{\bf H}' f \|^2_{B_{-1}(D_{\beta}')}
 =\frac{1}{4\pi^2}
\left\|
 \int_{d_{\beta}'}
\frac{1}{\overline{z}_2}
 \int_{\mathbb{R}}
   \hat{h}(\xi,y)e^{i (\cdot) \xi}
   \hat{f}(\xi,y,z_2) d\xi dy dV(z_2)
 \right\|^2_{B_{\omega_{-1}}}\\
&=\frac{1}{4\pi^2}
  \int_{I_{\beta}}
\int_{\mathbb{R}}\left|
  \int_{d_{\beta}'}
\frac{1}{\overline{z}_2}
 \int_{\mathbb{R}}
   \hat{h}(\xi,y)e^{-y'\xi}e^{i x' \xi}
   \hat{f}(\xi,y,z_2) d\xi dy dV(z_2) \right|^2
   dx'  \omega_{-1}(y')
    dy'\\
 &=\frac{1}{4\pi^2}
  \int_{I_{\beta}}
\int_{\mathbb{R}}\left|
  \int_{d_{\beta}'}
\frac{1}{\overline{z}_2}
   \hat{h}(\zeta,y)e^{-y'\zeta}
   \hat{f}_{-1}(\zeta,y,z_2)  dy dV(z_2) \right|^2
   d\zeta  \omega_{-1}(y')
    dy'\\
&\lesssim
  \int_{I_{\beta}}
\int_{\mathbb{R}}\Bigg[ \left(
  \int_{I_{\beta}}
  \left| \hat{h}(\zeta,y)\right|^2\omega_{-1}(y)dy \right)\times\\
  &\qquad\qquad
  \left( \int_{d_{\beta}'} \left|
   \hat{f}_{-1}(\zeta,y,z_2)\right|^2  dy dV(z_2)  \right)
  e^{-2y'\zeta} \Bigg] d\zeta   \omega_{-1}(y')
    dy'.
\end{align*}

From Theorem \ref{proph} $(iii)$ and (\ref{hjest}) we have that
\begin{equation*}
 \int_{I_{\beta}}
  \left| \hat{h}(\zeta,y)\right|^2\omega_{-1}(y)dy
   \lesssim
|\zeta|^{4}e^{-2 Re \zeta^2}e^{2(\beta-\pi)|Re\zeta|}.
\end{equation*}
We continue with our estimate of $ \| {\bf H}' f
\|_{B_{-1}(D_{\beta}')}$:
\begin{align*}
 \| {\bf H}' f \|_{B_{-1}(D_{\beta}')}^2
  & \lesssim
\kern-2pt\int_{\mathbb{R}}
  \int_{d_{\beta}'} \kern-2pt\left|
   \hat{f}_{-1}(\zeta,y,z_2)\right|^2  dy dV(z_2)
   |\zeta|^{4}e^{-2\zeta^2}
    e^{2(\beta-\pi) |\zeta|} \hat{\omega}_{-1}(-2i\zeta)
   d\zeta\\
&\lesssim \| f_{-1} \|^2_{L^2(D_{\beta}')},
\end{align*}
where the last estimate follows by the fact that
 the term $|\zeta|^{4}e^{-2\zeta^2}
    e^{2(\beta-\pi) |\zeta|} \hat{\omega}_{-1}(-2i\zeta)$ is bounded with
    respect to $\zeta$.
\end{proof}

 We recall the biholomorphic mapping $\Psi : D_{\beta}' \rightarrow D_{\beta}$
 from (\ref{psidef}).  Through a change of variables $\Psi^{-1}$,
  ${\bf H}'$ induces an integral operator on
   $L^2(D_{\beta})$:
    $g \mapsto (g\circ \Psi) det(\Psi^{-1})'$,
     $(\Psi^{-1})'$ being the complex Jacobian
     of $(\Psi^{-1})'$, is an isometry between
     $L^2(D_{\beta})$ and $L^2(D_{\beta}')$, and in fact since
    $\Psi$ is biholomorphic, between Bergman spaces (see also \eqref{trnsfrm}).
 In this regard,
 we define the kernel
\begin{equation}
 \label{kernh}
 \mathcal{H}(z,w) = \frac{1}{z_1\overline{w}_1}
  \mathcal{H}'(\Psi^{-1}z,\Psi^{-1}w),
\end{equation}
using the fact that $det (\Psi^{-1} (z) )' = \frac{1}{z_1}$.

  Let ${\bf H}$ be the integral operator
\begin{equation*}
%\label{int-oper-H}
{\bf H} f(w) = \int_{D_\beta} f(z) \overline{\mathcal{H}(z,w)}
    dV(z) ,
\end{equation*}
where $\mathcal{H}(z,w)$ is given by \eqref{kernh}.

 Then as a result of Proposition \ref{mappingprop}, we have the
 following
\begin{cor} We have that
\label{hjmap}
\begin{equation*}
{\bf H} : L^2(D_{\beta}) \rightarrow B_{-1}(D_{\beta}),
\end{equation*}
and
\begin{equation*}
\| {\bf H} f\|_{B_{-1}(D_{\beta})}
 \lesssim \|f\|_{L_{-1}^2(D_{\beta})}.
\end{equation*}
\end{cor}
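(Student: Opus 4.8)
The plan is to obtain the statement from Proposition~\ref{mappingprop} by transporting $\mathbf{H}'$ through the biholomorphism $\Psi$ of \eqref{psidef}. Let $U\colon L^2(D_\beta)\to L^2(D_\beta')$ be the pullback isometry $Uf=(f\circ\Psi)\cdot\det\Psi'$; since the real Jacobian of $\Psi$ equals $|\det\Psi'|^2$, $U$ is a surjective isometry, and because $\Psi$ is biholomorphic it restricts to a surjective isometry of Bergman spaces (this is the content of \eqref{trnsfrm}). Its inverse is $U^{-1}G=\det(\Psi^{-1})'\cdot(G\circ\Psi^{-1})$, and $\det(\Psi^{-1})'(z)=1/z_1$ — precisely the factor occurring in the definition \eqref{kernh} of $\mathcal{H}$. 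Moreover $\Psi\circ\rho_\theta=\rho_\theta\circ\Psi$ for every $\theta$, and $\det\Psi'(z)=e^{z_1}$ is $\rho_\theta$-invariant, so $U$ intertwines the two rotational decompositions; in particular $U\big(L^2_j(D_\beta)\big)=L^2_j(D_\beta')$ for every $j$, and $U^{-1}$ carries $B_{-1}(D_\beta')$ isometrically onto $B_{-1}(D_\beta)$.

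Next I would check that \eqref{kernh} is exactly the kernel for which $\mathbf{H}=U^{-1}\,\mathbf{H}'\,U$. Writing $z=\Psi(\tilde z)$, $w=\Psi(\tilde w)$ and using $dV(z)=|z_1|^2\,dV(\tilde z)$ together with $\overline{\mathcal{H}(z,w)}=\tfrac{1}{\overline{z_1}\,w_1}\,\overline{\mathcal{H}'(\tilde z,\tilde w)}$, the change of variables in the defining integral of $\mathbf{H}f$ gives
\begin{equation*}
\mathbf{H}f(\Psi\tilde w)=\frac{1}{e^{\tilde w_1}}\int_{D_\beta'}\big[(f\circ\Psi)(\tilde z)\,e^{\tilde z_1}\big]\,\overline{\mathcal{H}'(\tilde z,\tilde w)}\,dV(\tilde z)=e^{-\tilde w_1}\,\big(\mathbf{H}'(Uf)\big)(\tilde w),
\end{equation*}
which is $\big(U^{-1}\mathbf{H}'Uf\big)(\Psi\tilde w)$ because $U^{-1}G=\tfrac{1}{z_1}(G\circ\Psi^{-1})$. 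In particular this also shows that the integral defining $\mathbf{H}f$ converges, inherited from Proposition~\ref{mappingprop}.

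The conclusion is then immediate: for $f\in L^2(D_\beta)$ one has $(Uf)_{-1}=U(f_{-1})$, so by Proposition~\ref{mappingprop} and the isometry properties of $U$,
\begin{equation*}
\|\mathbf{H}f\|_{B_{-1}(D_\beta)}=\|\mathbf{H}'(Uf)\|_{B_{-1}(D_\beta')}\lesssim\|(Uf)_{-1}\|_{L^2_{-1}(D_\beta')}=\|f_{-1}\|_{L^2_{-1}(D_\beta)},
\end{equation*}
and $\mathbf{H}f=U^{-1}\big(\mathbf{H}'(Uf)\big)\in U^{-1}\big(B_{-1}(D_\beta')\big)=B_{-1}(D_\beta)$. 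I do not anticipate a genuine obstacle here: the argument is a change-of-variables transfer. The only points requiring care are the bookkeeping of the Jacobian factors in \eqref{kernh} — keeping the holomorphic factor $\det\Psi'=e^{z_1}$ separate from the modulus-squared that supplies the real Jacobian — and verifying the $\rho_\theta$-equivariance of $\Psi$, so that the $j=-1$ subspaces on the two sides are correctly matched.
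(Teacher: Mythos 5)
Your argument is correct and is exactly the route the paper takes: the corollary is obtained from Proposition~\ref{mappingprop} by conjugating $\mathbf{H}'$ with the Bergman-space isometry induced by $\Psi$, which is precisely what the Jacobian factor $\frac{1}{z_1\overline{w}_1}$ in \eqref{kernh} encodes. Your write-up simply supplies the bookkeeping (the identity $\mathbf{H}=U^{-1}\mathbf{H}'U$ and the $\rho_\theta$-equivariance of $\Psi$, which matches the $j=-1$ components) that the paper leaves implicit.
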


 We now define the projection operator ${\bf T}_{-1}$ as
\begin{equation*}
{\bf T}_{-1}={\bf P}_{-1}+{\bf H},
\end{equation*}
 where ${\bf P}_{-1}:L^2(D_{\beta})\rightarrow B_{-1}(D_{\beta})$
is the orthogonal projection operator.

\section{Properties of the projection ${\bf T}_{-1}$}
 \label{propproj}

\begin{thrm}
 \label{projthrm}
  Let $\beta>\pi/2$ and ${\bf T}_{-1}={\bf P}_{-1}+{\bf H}$.
   Then
\begin{equation*}
{\bf T}_{-1}: L^2(D_{\beta}) \rightarrow B_{-1}(D_{\beta}).
\end{equation*}
Furthermore, ${\bf T}_{-1}$ is a projection, and has the
regularity property
\begin{equation}
 \label{wk}
{\bf T}_{-1}: W^k_{\mathscr{D}}(D_{\beta}) \rightarrow
W^k_{-1}(D_{\beta})
 \qquad \forall k,
\end{equation}
and
\begin{equation*}
\|{\bf T}_{-1}f\|_{W^k_{-1}(D_{\beta})}
 \lesssim \|f\|_{W^k(D_{\beta})}
\end{equation*}
  for $f\in W^k_{\mathscr{D}}(D_{\beta})$.
\end{thrm}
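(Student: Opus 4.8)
The plan is to verify the three assertions — that $\mathbf{T}_{-1}$ maps into $B_{-1}(D_\beta)$, that it is idempotent, and that it preserves Sobolev levels — by reducing everything to the already-established mapping and orthogonality properties of $\mathbf H$ and $\mathbf P_{-1}$. The first claim is immediate: Corollary \ref{hjmap} gives $\mathbf H: L^2(D_\beta)\to B_{-1}(D_\beta)$, and $\mathbf P_{-1}$ maps into $B_{-1}(D_\beta)$ by definition, so the sum does too. For idempotency, write $\mathbf T_{-1}^2=(\mathbf P_{-1}+\mathbf H)(\mathbf P_{-1}+\mathbf H)=\mathbf P_{-1}^2+\mathbf P_{-1}\mathbf H+\mathbf H\mathbf P_{-1}+\mathbf H^2$. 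Since $\mathbf P_{-1}$ is an orthogonal projection, $\mathbf P_{-1}^2=\mathbf P_{-1}$ and $\mathbf P_{-1}g=g$ for $g\in B_{-1}(D_\beta)$, so $\mathbf P_{-1}\mathbf H=\mathbf H$. The key point is that $\mathbf H\mathbf P_{-1}=0$ and $\mathbf H^2=0$: by Theorem \ref{proph}$(ii)$ (transported to $D_\beta$ via $\Psi$ exactly as in \eqref{kernh}), the kernel $\mathcal H(\cdot,w)$ is orthogonal to $B_{-1}(D_\beta)$, hence $\mathbf H$ annihilates $B_{-1}(D_\beta)$; since the range of $\mathbf P_{-1}$ and the range of $\mathbf H$ both lie in $B_{-1}(D_\beta)$, both composites vanish. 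Therefore $\mathbf T_{-1}^2=\mathbf P_{-1}+\mathbf H=\mathbf T_{-1}$.

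For the regularity statement I would work on $D_\beta'$ via the isometry $\Psi$ and reduce to showing $\mathbf T_{-1}':=\mathbf P_{-1}'+\mathbf H'$ maps $W^k_{\mathscr D}(D_\beta')$ to $W^k_{-1}(D_\beta')$ with the stated bound. The strategy is the one signalled in the introduction: rather than estimate derivatives of $\mathbf T_{-1}'f$ directly, use the composition of first-order (tangential) operators $\Lambda_t$ from \cite{Ba95} (Theorems 2.2 and 3.1 there) to replace $f$ by a function vanishing to high order at the boundary without changing the image under the projection, together with the $W^s_{\mathscr D}$ spaces forced by the corrected adjoint computation for $\Lambda_t$ acknowledged in the introduction. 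Concretely, I would take the Fourier/Mellin picture: on $D_\beta'=\mathbb R\times d_\beta'$ a derivative in the $x$-variable corresponds to multiplication of $\hat f(\xi,y,z_2)$ by $i\xi$, and derivatives in $y$ and $z_2$ are controlled by the geometry of $d_\beta'$. The crucial input is Theorem \ref{proph}$(i)$: after adding $\hat h$, the only poles of the combined kernel symbol lie at integer multiples of $i$, and these are precisely the poles that — by the residue analysis of \cite{Ba92} — produce \emph{integer} powers of $z_1$, i.e. genuinely holomorphic, Sobolev-regular contributions, rather than the offending non-integer powers. So the proof amounts to: (a) express $\widehat{\mathbf T_{-1}'f}$ via contour integration against the pole-corrected symbol; (b) shift the contour $S_0$ to $S_t$ for $|t|$ up to some value exceeding $k$, picking up only residues at integer points; (c) bound the shifted-contour integral using the decay estimate \eqref{hjest} for $\hat h$ (and the analogous elementary decay of $1/\hat\omega_{-1}(2i\xi)$ away from its poles, from \eqref{hato}) against $\|\partial_x^k f\|_{L^2}$, which is finite and small precisely because $f\in W^k_{\mathscr D}$; (d) check the residue terms at integer multiples of $i$ lie in $W^k_{-1}(D_\beta')$.

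The main obstacle is step (c)–(d): one must show that the contour shift is legitimate and that the residue contributions are $W^k$-regular. The contour shift requires knowing that $\hat f(\xi,y,z_2)$, for $f\in W^k_{\mathscr D}$, extends holomorphically in $\xi$ to the strip $|\mathrm{Im}\,\xi|<k$ with adequate decay — this is where vanishing at the boundary (the $\mathscr D$ condition, or its $W^k_{\mathscr D}$ closure) is essential, since an $f$ merely in $W^k(D_\beta')$ need not have such an extension, and this is exactly the subtlety the referee's correction exposed. Once the shift is justified, bounding the remaining integral is the weighted $L^2$-estimate already rehearsed in the proof of Proposition \ref{mappingprop}, now carried out on $S_t$ with the extra factor $|\xi|^{2k}$ from the derivatives absorbed by $e^{-2\mathrm{Re}\,\xi^2}$ in \eqref{hjest}; and the finitely many residue terms are explicit holomorphic functions (integer powers of $z_1$ times $z_2^{-1}$, localized in $\xi$) whose $W^k$ norms are controlled by the same data. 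Summing over the finitely many integer poles crossed and invoking the $\Psi$-isometry then yields \eqref{wk} and the norm bound.
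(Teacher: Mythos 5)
Your first two claims are handled exactly as in the paper: the $L^2\to B_{-1}$ mapping follows from Corollary \ref{hjmap}, and the idempotency computation $\mathbf T_{-1}^2=\mathbf P_{-1}+\mathbf P_{-1}\mathbf H+\mathbf H\mathbf P_{-1}+\mathbf H^2=\mathbf P_{-1}+\mathbf H$, using Theorem \ref{proph}$(ii)$ to kill $\mathbf H\mathbf P_{-1}$ and $\mathbf H^2$, is correct.

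The regularity argument, however, rests on a step that fails. You propose to shift the $\xi$-contour in the representation of $\widehat{\mathbf T_{-1}'f}$ and assert that for $f\in W^k_{\mathscr D}$ the transform $\hat f(\xi,y,z_2)$ extends holomorphically to the strip $|\mathrm{Im}\,\xi|<k$. It does not: the Fourier transform here is taken in $x=\mathrm{Re}\,z_1$, and $D_\beta'=\mathbb R\times d_\beta'$ is \emph{unbounded} in $x$ — the boundary of the domain involves only $(y,z_2)$. So the $\mathscr D$-condition constrains behavior near $\partial d_\beta'$ and gives no decay in $x$ beyond $L^2$; membership in $W^k$ yields $|\xi|^k\hat f\in L^2$ on the real axis but no analytic continuation off it. (Relatedly, your plan to ``check the residue terms at integer multiples of $i$ lie in $W^k_{-1}$'' is moot and, taken at face value, would fail: a residue at $\xi=-im$, $1\le m\le k$, would carry a factor $w_1^{m-1-k}$ that is not square-integrable near the edge $w_1=0$ of $D_\beta$. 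The point is that these residues vanish identically.)

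The mechanism the paper actually uses is different in two essential ways. First, the contour shift is performed inside the \emph{kernel's} $\xi$-integral only (with Fubini justified by splitting into $|w_1|<|z_1|$ and $|w_1|>|z_1|$ and shifting down or up accordingly), so no analyticity of $\hat f$ is needed. Differentiating the kernel $k$ times in $w_1$ produces the factor $(i\xi-1)\cdots(i\xi-k)\,w_1^{i\xi-k-1}$; this polynomial kills precisely the integer poles at $-i,\dots,-ik$ while $\hat h$ has removed the non-integer ones, so the shift to $\mathbb R\mp ik$ crosses no poles, and after the shift the integrand is an exact $k$-th derivative in $\overline z_1$ (for the Bergman part) or $\Lambda_t^k$ (for the correction part). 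Second, one then integrates by parts in $z$ to throw these derivatives onto $f_{-1}$; the adjoint $\Lambda_t^\ast=-\Lambda_t-|z_1|^{-1}$ produces weighted norms $\sum_{\alpha\le k}\||z_1|^{-k+\alpha}\Lambda_t^\alpha f_{-1}\|_{L^2}$, and it is \emph{here} — via a Hardy-type inequality (Grisvard, Theorem 1.4.4.4) valid on the closure of $C_0^\infty$ — that the hypothesis $f\in W^k_{\mathscr D}$ is genuinely used, not through any extension of $\hat f$. Without these two ingredients your outline does not close.
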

\begin{proof}

The mapping from $L^2(D_{\beta})$ to $B_{-1}(D_{\beta})$ follows
from the corresponding properties of ${\bf P}_{-1}$ and ${\bf H}$
(see Corollary \ref{hjmap}).

That ${\bf T}_{-1}$ is a projection follows from ${\bf P}_{-1}$
being a projection and from the restriction of ${\bf H}$ to
$B_{-1}(D_{\beta})$ being equivalently 0 (from Theorem \ref{proph}
$ii.$):
\begin{align*}
{\bf T}_{-1}^2&=
 {\bf P}_{-1}^2+{\bf P}_{-1}{\bf H}+{\bf H}{\bf P}_{-1}
  +{\bf H}^2\\
  &={\bf P}_{-1}+{\bf H}\\
  &={\bf T}_{-1}.
\end{align*}

Since ${\bf T}_{-1} f$ is holomorphic, to prove (\ref{wk}) we
estimate the $L^2$ norm of holomorphic derivatives of ${\bf
T}_{-1} f$.  Also, ${\bf T}_{-1} f$ is of the form $g(w_1,|w_2|)
w_2^{-1}$, where the function $g(w_1,|w_2|)$ is holomorphic and
 locally constant in $w_2$, so its derivatives in $w_2$ are zero and we only
need to estimate the derivatives with respect to the first
variable. To prove the theorem we thus show
\begin{equation}
\label{w1der}
 \left\| \frac{\partial^k}{\partial w_1^k} {\bf T}_{-1} f
\right\|_{L^2(D_{\beta})} \lesssim \|f_{-1}\|_{W^k(D_{\beta})},
\end{equation}
 for $f\in W^k_{\mathscr{D}}(D_{\beta})$.

The domain $D_{\beta}'$ is related to $D_{\beta}$ via the
biholomorphic mapping $\Psi$.
   We can then read off the
kernels attached to the domain $D_{\beta}$ from the transformation
formula applied to the corresponding kernels on $D_{\beta}'$, as
in (\ref{kernh}). We have
%\begin{equation*}
% K_{D_{\beta}}(z,w) = \frac{1}{z_1\overline{w}_1}
%  K_{D_{\beta}'}(\Psi^{-1}z,\Psi^{-1}w)
%\end{equation*}
the relations
\begin{align*}
&K_{-1}(z,w) = \frac{1}{z_1\overline{w}_1}
  K_{-1}'(\Psi^{-1}z,\Psi^{-1}w)\\
&\mathcal{H}(z,w) = \frac{1}{z_1\overline{w}_1}
  \mathcal{H}'(\Psi^{-1}z,\Psi^{-1}w)\\
&\mathcal{T}_{-1}(z,w) = \frac{1}{z_1\overline{w}_1}
  \mathcal{T}_{-1}'(\Psi^{-1}z,\Psi^{-1}w)
  ,
\end{align*}
where $K_{-1}$, $\mathcal{H}$, $\mathcal{T}_{-1}$
 (resp. $K_{-1}'$, $\mathcal{H}'$, $\mathcal{T}_{-1}'$)
 are the kernels for, respectively, $ {\bf P}_{-1}$, ${\bf H}$,
 ${\bf T}_{-1}$ (resp. $ {\bf P}_{-1}'$, ${\bf H}'$,
 ${\bf T}_{-1}'$).

 Using integration by parts, we relate
$\frac{\partial^k}{\partial w_1^k} {\bf T}_{-1} f$ to
  $k^{th}$ order
 derivatives falling on $f$.

From above, we have
\begin{equation*}
{\bf T}_{-1} f (w) =
 \int_{D_{\beta}}
   \overline{\mathcal{T}_{-1}(z,w)} f(z) dV(z),
\end{equation*}
where
\begin{align*}
\overline{\mathcal{T}_{-1} (z,w)}
 = \frac{1}{2\pi}
 \frac{1}{\overline{z}_2 w_2}
 \int_{\mathbb{R}}&\Bigg(
   \frac{1}{\hat{\omega}_{-1}(2i\xi)}
   \overline{z}_1^{-i\xi-1} w_1^{i\xi-1}\\
   &
 +\hat{h}(\xi,(\log z_1 -\log \overline{z}_1)/2i)
 z_1^{-i\xi/2-1}\overline{z}_1^{-i\xi/2}
 w_1^{i\xi-1}
  \Bigg)d\xi.
\end{align*}
 By virtue of the
 factor $\overline{z}_2^{-1}$ in $\overline{\mathcal{T}_{-1}
 (z,w)}$, all action is isolated on $f_{-1}(z)$.
 Thus,
\begin{align*}
{\bf T}_{-1} f (w) =&
 \int_{D_{\beta}}
   \overline{\mathcal{T}_{-1}(z,w)} f(z) dV(z)\\
   =& \int_{D_{\beta}}
   \overline{\mathcal{T}_{-1}(z,w)} f_{-1}(z) dV(z).
\end{align*}

 Furthermore,
\begin{equation}
\label{contribute}
 \frac{\partial^k}{\partial w_1^k} {\bf T}_{-1} f
 = \int_{D_{\beta}}
   \frac{\partial^k}{\partial w_1^k}
   \overline{\mathcal{T}_{-1}(z,w)} f_{-1}(z) dV(z),
\end{equation}
and
\begin{align}
\nonumber
 \frac{\partial^k}{\partial w_1^k}&
 \overline{\mathcal{T}_{-1}(z,w)}
 =\\
 \nonumber
  &\frac{1}{2\pi}
  \frac{1}{\overline{z}_2 w_2}
 \int_{\mathbb{R}}(i\xi-1)(i\xi-2)\cdots (i\xi-k) \Bigg(
   \frac{1}{\hat{\omega}_{-1}(2i\xi)}
   \overline{z}_1^{-i\xi-1}w_1^{i\xi-k-1}\\
 \label{dwtk}
   &\qquad
 +\hat{h}(\xi,(\log z_1 -\log \overline{z}_1)/2i)
 z_1^{-i\xi/2-1}\overline{z}_1^{-i\xi/2}w_1^{i\xi-k-1}\Bigg)
  d\xi.
\end{align}

Our strategy is roughly as follows: we use shifts of contours of
integration to write the integrands of \eqref{dwtk} using
derivatives with respect to $z_1$; we make sure Fubini's theorem
applies with respect to the $z$ and $\xi$ integrals and then we
take the $z_1$ derivatives outside the $\xi$ integrals; finally we
can then perform an integration by parts in the $z_1$ variable in
\eqref{contribute}.

When shifting the contour of integration, in order to verify that
Fubini's theorem applies, we work with the two cases, each of
which determines a different direction of shift:
\begin{enumerate}
\item[$i)$] $|w_1|<|z_1|$
 \item[$ii$)] $|z_1|<|w_1|$.
\end{enumerate}

   To illustrate the cases, we
consider integrals of the form
 \begin{equation*}
 \phi_t(w_1)= \int_{U} \frac{1}{\overline{z}_2}
\int_{Im(\xi)=t} \sigma_{w_1}(\xi,z_1,\overline{z}_1) f_{-1}(z)
  d\xi dV(z),
\end{equation*}
where $\sigma_{w_1}$ will be either
\begin{equation*}
%\label{sig1}
 (i\xi-1)(i\xi-2)\cdots (i\xi-k)
   \frac{1}{\hat{\omega}_{-1}(2i\xi)}
   \frac{1}{z_1w_1^{k+1}}
   \left(\frac{w_1}{\overline{z}_1}\right)^{i\xi}
\end{equation*}
or
\begin{equation*}
%\label{sig2}
 (i\xi-1)(i\xi-2)\cdots (i\xi-k)
\hat{h}(\xi,(\log z_1 -\log
\overline{z}_1)/2i)\frac{1}{z_1w_1^{k+1}}
\left(\frac{w_1}{|z_1|}\right)^{i\xi},
\end{equation*}
and the domain of integration $U$ will be either
 $D_{\beta}\bigcap \{|w_1|<|z_1|\}$
 or $D_{\beta}\bigcap \{|z_1|<|w_1|\}$. Using the
 estimates for $\hat{\omega}_{-1}(2i\xi)$ and
  the estimate in (\ref{hunif}) for
   $\hat{h}$, we have
\begin{equation}
\label{phit} |\phi_t(w_1)|
 \lesssim
  \int_{U} \left|\frac{1}{\overline{z}_2}
 \frac{1}{z_1w_1^{k+1}} \right|
 \left(\frac{|z_1|}{|w_1|}\right)^t |f_{-1}(z)|
   dV(z).
\end{equation}
We see Fubini's theorem applies in case $i)$ when $t<0$ and in
case $ii)$ when $t>0$.  The signs of $t$ correspond to shifts in
the lower- and upper half planes, respectively.

 We now proceed to the write
an expression for the kernel $\frac{\partial^k}{\partial w_1^k}
   \overline{\mathcal{T}_{-1}(z,w)}$ in terms of
    derivatives with respect to the $z$ variable,
    corresponding to the two cases.  It will be shown in both
    cases we are lead to the same expression.

Case $i)$.  By construction of the term $h$ in Section
\ref{correct} the integrand exhibits poles only at integer
multiples of $i$, of which those at $-i, -2i, \ldots, -ik$ are
cancelled.  We therefore deform the contour of integration in
(\ref{dwtk})
 to $\mathbb{R}-ik$.
 The contribution of the sides of the contour are null due to the
 exponential decay in $\xi$ of the integrand.

 We now work with the contour of integration in
(\ref{dwtk})
 deformed to $\mathbb{R}-ik$.  We first consider
\begin{align*}
 &\frac{1}{2\pi} \frac{1}{\overline{z}_2 w_2}
 \int_{\mathbb{R}-ik}
(i\xi-1)(i\xi-2)\cdots (i\xi-k)
  \frac{1}{\hat{\omega}_{-1}(2i\xi)}
\overline{z}_1^{-i\zeta-1} w_1^{i\xi-k-1} d\xi
 =
 \\
 & \frac{1}{2\pi} \frac{1}{\overline{z}_2 w_2}
 \int_{\mathbb{R}} (i\zeta+k-1)(i\zeta+k-2)\cdots (i\zeta)
   \frac{1}{\hat{\omega}_{-1}(2i(\zeta-ik))}
   \overline{z}_1^{-i\zeta-k-1}w_1^{i\zeta-1} d\zeta.
\end{align*}
 We use
\begin{equation*}
\frac{1}{\hat{\omega}_{-1}(2i(\zeta-ik))}=
 (-1)^k \frac{1}{\pi}
  \frac{(\zeta-ik)^2}
  {\sinh[(2\beta-\pi)(\zeta-ik)]
  \sinh(\pi\zeta)},
 \end{equation*}
 hence
\begin{align*}
   &\frac{(i\zeta+k-1)(i\zeta+k-2)\cdots (i\zeta)}
   {\hat{\omega}_j(2i(\zeta-ik))}
   \overline{z}_1^{-i\zeta-k-1}w_1^{i\zeta-1}\\
   &\qquad \qquad= (-1)^k \frac{1}{\pi}
   \frac{(\zeta-ik)(\zeta-ik)}
   {\sinh[(2\beta-\pi)(\zeta-ik)]
   \sinh(\pi\zeta)}\times\\
   &\qquad \qquad \qquad \qquad
   (i\zeta+k-1)(i\zeta+k-2)\cdots (i\zeta)
   \overline{z}_1^{-i\zeta-k-1}w_1^{i\zeta-1}\\
&\qquad \qquad = (-1)^k \frac{1}{\pi}
   \frac{(\zeta-ik)(i\zeta)}
   {\sinh[(2\beta-\pi)(\zeta-ik)]
   \sinh(\pi\zeta)}\times\\
   &\qquad \qquad \qquad \qquad
   (\zeta-ik)(i\zeta+k-1)\cdots (i\zeta+1)
   \overline{z}_1^{-i\zeta-k-1}w_1^{i\zeta-1}\\
&\qquad \qquad=  \frac{1}{\pi}
   \frac{(\zeta-ik)\zeta}
   {\sinh[(2\beta-\pi)(\zeta-ik)]
   \sinh(\pi\zeta)}
   \frac{\partial^k}{\partial {\overline{z}_1}^k}
   \overline{z}_1^{-i\zeta-1}w_1^{i\zeta-1}.
\end{align*}
For the integral above we thus have
\begin{align*}
\frac{1}{2\pi} \frac{1}{\overline{z}_2 w_2}
 &\int_{\mathbb{R}-ik}
(i\xi-1)(i\xi-2)\cdots (i\xi-k)
  \frac{1}{\hat{\omega}_{-1}(2i\xi)}
\overline{z}_1^{-i\zeta-1} w_1^{i\xi-k-1} d\xi
 =
 \\
& \frac{1}{2\pi^2} \frac{1}{\overline{z}_2 w_2}
 \int_{\mathbb{R}}
\frac{(\zeta-ik)\zeta}
   {\sinh[(2\beta-\pi)(\zeta-ik)]
   \sinh(\pi\zeta)}
   \frac{\partial^k}{\partial {\overline{z}_1}^k}
   \overline{z}_1^{-i\zeta-1}w_1^{i\zeta-1}d\zeta.
\end{align*}
Similarly, we work with
\begin{align}
 \label{hint}
\frac{1}{2\pi} \frac{1}{\overline{z}_2 w_2}
 \int_{\mathbb{R}-ik}
(i\xi-1)(i\xi-2)\cdots (i\xi-k)
  &\hat{h}(\xi,(\log  z_1 -\log \overline{z}_1)/2i)\times\\
  \nonumber
& z_1^{-i\xi/2-1}\overline{z}_1^{-i\xi/2}
  w_1^{i\xi-k-1}
  d\xi.
 \end{align}

Let us write
\begin{equation*}
 \label{h-g}
\hat{h}(\xi,(\log z_1 -\log \overline{z}_1)/2i)
 = \frac{\xi}{\sinh(\pi \xi)} g(\xi,z_1),
\end{equation*}
and note that $g(\xi,z_1)$ has the property
\begin{equation*}
\Lambda_tg(\xi,z_1) =0,
\end{equation*}
where
\begin{equation*}
\Lambda_t := \left ( \left(\frac{z_1}{\overline{z}_1}\right)^{1/2}
\frac{\partial}{\partial z_1}
 + \left(\frac{\overline{z}_1}{z_1}\right)^{1/2}
 \frac{\partial}{\partial
 \overline{z}_1} \right).
\end{equation*}

The integrand in (\ref{hint}) can thus be written according to
\begin{align*}
 (i\zeta+k-1)&(i\zeta+k-2)\cdots (i\zeta)
  \hat{h}(\zeta-ik,(\log z_1 -\log \overline{z}_1)/2i)\times\\
  &
 \qquad\qquad\qquad\qquad\qquad\qquad z_1^{(-i\zeta-k)/2-1}\overline{z}_1^{(-i\zeta-k)/2}
  w_1^{i\zeta-1}\\
  &=(-1)^k
 \frac{\zeta}{\sinh(\pi\zeta)}g(\zeta-ik,z_1)\times\\
 &\qquad
 (i\zeta+k)(i\zeta+k-1)\cdots (i\zeta+1)
 z_1^{(-i\zeta-k)/2-1}\overline{z}_1^{(-i\zeta-k)/2}
  w_1^{i\zeta-1}\\
  &=
 \frac{\zeta}{\sinh(\pi\zeta)}g(\zeta-ik,z_1)
% \left(\frac{\overline{z}_1}{z_1}\right)^{1/2}
  \times\\
 &\qquad
 \left ( \left(\frac{z_1}{\overline{z}_1}\right)^{1/2}
\frac{\partial}{\partial z_1}
 + \left(\frac{\overline{z}_1}{z_1}\right)^{1/2}
 \frac{\partial}{\partial
 \overline{z}_1} \right)^k
 z_1^{-i\zeta/2-1}\overline{z}_1^{-i\zeta/2}
  w_1^{i\zeta-1}\\
 & =
 \frac{\zeta}{\sinh(\pi\zeta)}g(\zeta-ik,z_1)
(\Lambda_t)^k z_1^{-i\zeta/2-1}\overline{z}_1^{-i\zeta/2}
  w_1^{i\zeta-1}
 .
\end{align*}

Therefore,
\begin{align*}
\int&_{|w_1|<|z_1|}
   \frac{\partial^k}{\partial w_1^k}
   \overline{\mathcal{T}_{-1}(z,w)} f(z) dV(z)
=-\frac{1}{2\pi}\frac{1}{\overline{z}_2 w_2}\times\\
 &\int_{Re w_1< Re z_1}\Bigg[\frac{1}{\pi}
 \int_{\mathbb{R}}
 \frac{(\zeta-ik)\zeta}
   {\sinh[(2\beta-\pi)(\zeta-ik)]
   \sinh(\pi\zeta)}
   \frac{\partial^k}{\partial {\overline{z}_1}^k}
   \overline{z}_1^{-i\zeta-1}w_1^{i\zeta-1} d\zeta+\\
   &
\int_{\mathbb{R}}
 \frac{\zeta}{\sinh(\pi\zeta)}g(\zeta-ik,z_1)
(\Lambda_t)^k z_1^{-i\zeta/2-1}\overline{z}_1^{-i\zeta/2}
  w_1^{i\zeta-1}
  d\zeta
   \Bigg] f_{-1}(z) dV(z).
\end{align*}

We remark that, as outlined above, the $\zeta$ and $z$
integrations can be switched (just consider the integral $\phi_k$
in \eqref{phit}).

Case $ii)$.  We begin by writing (\ref{dwtk}) in the form:

\begin{align*}
\frac{\partial^k}{\partial w_1^k}
 \overline{\mathcal{T}_{-1}(z,w)}
 &=
  \frac{1}{2\pi}\frac{1}{\overline{z}_2 w_2}\times\\
 \int_{\mathbb{R}}\Bigg[
%\label{contour}
    &\frac{(-1)^k}{\pi} \frac{(\xi+ik)(\xi)}
    {\sinh\left[(2\beta-\pi)(\xi)\right] \sinh (\pi\xi)}
   \frac{\partial^k}{\partial \overline{z}_1^k}
   \overline{z}_1^{-i\xi + (k-1)} w_1^{i\xi-1-k}+\\
    &(-1)^k\frac{\xi+ik}{\sinh(\pi\xi)}g(\xi,z_1)
 \left ( \Lambda_t \right)^k
  z_1^{-i\xi/2+k/2-1}\overline{z}_1^{-i\xi/2+k/2}
  w_1^{i\xi-k-1}\Bigg]d\xi,
\end{align*}
which is also obtained by
 deforming the contour of integration to
 $\mathbb{R}+ik$ (using that the sides of the contour give no contributions
 in the same manner as that of case $i)$) of the following integral
\begin{align*}
& -\frac{1}{2\pi}\int_{\mathbb{R}}\Bigg[\frac{1}{\pi}
 \frac{\zeta(\zeta-ik)}
    {\sinh\left[(2\beta-\pi)(\zeta-ik)\right] \sinh (\pi\zeta)}
   \frac{\partial^k}{\partial \overline{z}_1^k}
   \overline{z}_1^{-i\zeta-1} w_1^{i\zeta-1}\\
   &
\qquad\qquad +\frac{\zeta}{\sinh(\pi\zeta)}g(\zeta-ik,z_1)
 \left ( \Lambda_t \right)^k
 z_1^{-i\zeta/2-1}\overline{z}_1^{-i\zeta/2}
  w_1^{i\zeta-1}\Bigg]d\zeta,
\end{align*}
noting that the contribution from the poles at integer multiples
of $i$ are cancelled due to the differential operators.

Combining the results in cases $i)$ and $ii)$, we have
\begin{align*}
\nonumber \int_{D_{\beta}}
   &\frac{\partial^k}{\partial w_1^k}
   \overline{\mathcal{T}_{-1}(z,w)} f(z) dV(z)=\\
   \nonumber
& \int_{|w_1|<|z_1|}
   \frac{\partial^k}{\partial w_1^k}\overline{\mathcal{T}_{-1}(z,w)} f(z)
   dV(z)
+ \int_{|w_1|>|z_1|}
   \frac{\partial^k}{\partial w_1^k}\overline{\mathcal{T}_{-1}(z,w)} f(z)
   dV(z),
\end{align*}
where
\begin{align*}
\int_{|w_1|<|z_1|}
   &\frac{\partial^k}{\partial w_1^k}
   \overline{\mathcal{T}_{-1}(z,w)} f(z) dV(z)=\\
\nonumber
 -&\frac{1}{2\pi}
\int_{|w_1|<|z_1|} \frac{1}{\overline{z}_2 w_2}\times\\
\nonumber & \qquad
 \Bigg[\frac{1}{\pi}
 \int_{\mathbb{R}}
 \frac{(\zeta-ik)\zeta}
   {\sinh[(2\beta-\pi)(\zeta-ik)]
   \sinh(\pi\zeta)}
   \frac{\partial^k}{\partial {\overline{z}_1}^k}
   \overline{z}_1^{-i\zeta-1}w_1^{i\zeta-1} d\zeta+\\
% \label{twoints}
\nonumber
   &
\int_{\mathbb{R}}
 \frac{\zeta}{\sinh(\pi\zeta)}g(\zeta-ik,z_1)
(\Lambda_t)^k z_1^{-i\zeta/2-1}\overline{z}_1^{-i\zeta/2}
  w_1^{i\zeta-1}
  d\zeta
   \Bigg] f_{-1}(z) dV(z)
\end{align*}
and
\begin{align*}
\int_{|w_1|>|z_1|}
   &\frac{\partial^k}{\partial w_1^k}
   \overline{\mathcal{T}_{-1}(z,w)} f(z) dV(z)=\\
\nonumber
 -&\frac{1}{2\pi}
\int_{|w_1|>|z_1|} \frac{1}{\overline{z}_2 w_2}\times\\
\nonumber & \qquad
 \Bigg[\frac{1}{\pi}
 \int_{\mathbb{R}}
 \frac{(\zeta-ik)\zeta}
   {\sinh[(2\beta-\pi)(\zeta-ik)]
   \sinh(\pi\zeta)}
   \frac{\partial^k}{\partial {\overline{z}_1}^k}
   \overline{z}_1^{-i\zeta-1}w_1^{i\zeta-1} d\zeta+\\
% \label{twoints}
\nonumber
   &
\int_{\mathbb{R}}
 \frac{\zeta}{\sinh(\pi\zeta)}g(\zeta-ik,z_1)
(\Lambda_t)^k z_1^{-i\zeta/2-1}\overline{z}_1^{-i\zeta/2}
  w_1^{i\zeta-1}
  d\zeta
   \Bigg] f_{-1}(z) dV(z).
\end{align*}
We now use Fubini's theorem in both case $i)$ and $ii)$ to take
the derivatives outside of the $\zeta$ integrals, and then combine
the results above.  Before doing so, we note
 \begin{align*}
\Lambda_t=&\left ( \left(\frac{z_1}{\overline{z}_1}\right)^{1/2}
\frac{\partial}{\partial z_1}
 + \left(\frac{\overline{z}_1}{z_1}\right)^{1/2}
 \frac{\partial}{\partial
 \overline{z}_1} \right)\\
  =& \partial_{r_1},
\end{align*}
 where $r_1=|z_1|$,
is a tangential differential operator.
 To calculate the adjoint of $\Lambda_t$
   we note for fixed $z_2$, $z_1$ can be written with coordinates
 $t_1$ and $d_1$, $d_1$ representing the distance to the boundary
$\mbox{Re }z_1 e^{-i\log z_2\overline{z}_2}=0$, via
\begin{equation}
 \label{coortd}
 z_1=(t_1+id_1)e^{i\alpha}
\end{equation}
where $\alpha=\log|z_2|^2-\pi/2 $.  In these coordinates we
calculate
\begin{equation*}
 \Lambda_t =
  \frac{t_1}{\sqrt{t_1^2+d_1^2}} \frac{\partial}{\partial t_1}
   + \frac{d_1}{\sqrt{t_1^2+d_1^2}} \frac{\partial}{\partial d_1}.
\end{equation*}
Then,
\begin{align}
\nonumber
 \left(\Lambda_t\right)^{\ast} =&-\Lambda_t -\frac{\partial}{\partial t_1}
  \left( \frac{t_1}{\sqrt{t_1^2+d_1^2}}\right)
   -\frac{\partial}{\partial d_1}
  \left( \frac{d_1}{\sqrt{t_1^2+d_1^2}}\right) \\
 \nonumber
   =& -\Lambda_t -\frac{1}{\sqrt{t_1^2+d_1^2}} \\
  \label{adjlamt}
 =& -\Lambda_t-\frac{1}{|z_1|}.
\end{align}
Furthermore, from the relation
 \eqref{coortd}, we can write
\begin{equation*}
\frac{\partial}{\partial \overline{z}_1} =
 \alpha_1 \frac{\partial}{\partial z_1}
  +\alpha_2 \frac{\partial}{\partial t_1},
\end{equation*}
where $\alpha_1(|z_2|)$ and $\alpha_2(|z_2|)$ are bounded away
from 0 and depend smoothly on $|z_2|$.

We recall that
 that $g(\xi,z_1)$ has the property
$ \Lambda_tg(\xi,z_1) =0,$ and so
\begin{align*}
 g(\zeta-ik,z_1)
(\Lambda_t)^k z_1^{-i\zeta/2-1}\overline{z}_1^{-i\zeta/2} =&
 (\Lambda_t)^k\left[ z_1^{-i\zeta/2-1}\overline{z}_1^{-i\zeta/2}
 g(\zeta-ik,z_1)\right].
  %\\
 %=&
 %(\partial_{r_1})^k\left[ z_1^{-i\zeta/2-1}\overline{z}_1^{-i\zeta/2}
 %g(\zeta-ik,z_1)\right].
\end{align*}

 We thus have, after commuting the $z$ derivatives with the
 $\zeta$ integrals,
\begin{align}
\nonumber \int_{D_{\beta}}
   &\frac{\partial^k}{\partial w_1^k}
   \overline{\mathcal{T}_{-1}(z,w)} f(z) dV(z)=\\
\nonumber
 -&\frac{1}{2\pi}
\int_{D_{\beta}} \frac{1}{\overline{z}_2 w_2}\times\\
\nonumber & \qquad
 \Bigg[\frac{1}{\pi}
 \frac{\partial^k}{\partial {\overline{z}_1}^k} \int_{\mathbb{R}}
 \frac{(\zeta-ik)\zeta}
   {\sinh[(2\beta-\pi)(\zeta-ik)]
   \sinh(\pi\zeta)}
   \overline{z}_1^{-i\zeta-1}w_1^{i\zeta-1} d\zeta+\\
 \label{twoints}
   &
(\Lambda_t)^k \int_{\mathbb{R}}
 \frac{\zeta}{\sinh(\pi\zeta)}g(\zeta-ik,z_1)
 z_1^{-i\zeta/2-1}\overline{z}_1^{-i\zeta/2}
  w_1^{i\zeta-1}
  d\zeta
   \Bigg] f_{-1}(z) dV(z).
\end{align}

Integrating by parts in the first integral on the right in
(\ref{twoints}) gives
\begin{align}
\nonumber &  -\frac{1}{2\pi^2} \int_{D_{\beta}}
 \frac{1}{\overline{z}_2 w_2}
\times\\  \nonumber  &\quad \left[ \frac{\partial^k}{\partial
{\overline{z}_1}^k}\int_{\mathbb{R}}
 \frac{(\zeta-ik)\zeta}
 {\sinh[(2\beta-\pi)(\zeta-ik)]
   \sinh(\pi\zeta)}
   \overline{z}_1^{-i\zeta-1}w_1^{i\zeta-1}
  d\zeta\right] f_{-1}(z) dV(z) \notag \\
  \nonumber
&=   -\frac{1}{2\pi^2} \int_{D_{\beta}}
 \frac{1}{\overline{z}_2 w_2}
\times\\
  \nonumber  &\quad \left[ (\alpha_2\partial_{t_1})^k \int_{\mathbb{R}}
 \frac{(\zeta-ik)\zeta}
 {\sinh[(2\beta-\pi)(\zeta-ik)]
   \sinh(\pi\zeta)}
   \overline{z}_1^{-i\zeta-1}w_1^{i\zeta-1}
  d\zeta\right] f_{-1}(z) dV(z) \nonumber \\
 &
=(-1)^{k+1} \frac{1}{2\pi^2} \int_{D_{\beta}} \frac{1}{
\overline{z}_2 w_2}
 \times
\nonumber
 \\
 &\qquad
  \left[\int_{\mathbb{R}}
 \frac{(\zeta-ik)\zeta}
 {\sinh[(2\beta-\pi)(\zeta-ik)]
   \sinh(\pi\zeta)}
   \overline{z}_1^{-i\zeta-1}w_1^{i\zeta-1}
  d\zeta\right] (\alpha_2\partial_{t_1})^k f_{-1}(z) dV(z). \label{l21}
\end{align}

Similarly, we perform an integration by parts in the second
integral in (\ref{twoints}),  using \eqref{adjlamt}.
\begin{align}
\nonumber
 -&\frac{1}{2\pi}\kern-2pt\int_{D_{\beta}}\kern-4pt
  \frac{1}{\overline{z}_2 w_2} (\Lambda_t)^k \kern-2pt\int_{\mathbb{R}}
 \frac{\zeta}{\sinh(\pi\zeta)}g(\zeta-ik,z_1)
 z_1^{-i\zeta/2-1}\overline{z}_1^{-i\zeta/2}
  w_1^{i\zeta-1}
  d\zeta
   f_{-1}(z) dV(z)\\
 \nonumber
 &= (-1)^{k+1}
  \frac{1}{2\pi}
  \int_{D_{\beta}}
 \frac{1}{\overline{z}_2 w_2}
  \Bigg[
 \int_{\mathbb{R}}
 \frac{\zeta}{\sinh(\pi\zeta)}g(\zeta-ik,z_1)
 z_1^{-i\zeta/2-1}\overline{z}_1^{-i\zeta/2}
  w_1^{i\zeta-1}
  d\zeta\Bigg]\times\\
  &\qquad\qquad \left (\Lambda_t + |z_1|^{-1}\right)^k
  f_{-1}(z)dV(z)
  \label{l22}.
\end{align}

To finish the proof we note that
 the proof of Proposition \ref{mappingprop} , with
$\hat{h}(\xi,y) e^{-ix\xi}$ replaced with
\begin{equation*}
\frac{(\xi-ik)\xi}
 {\sinh[(2\beta-\pi)(\xi-ik)]
   \sinh(\pi\xi)}
   e^{-i\overline{z}_1\xi}
\end{equation*}
 may be followed
 to show that
 the operator from (\ref{l21}) with kernel
 \begin{equation*}
\int_{D_{\beta}}
 \frac{1}{\overline{z}_2 w_2}
\int_{\mathbb{R}}
 \frac{(\zeta-ik)\zeta}
 {\sinh[(2\beta-\pi)(\zeta-ik)]
   \sinh(\pi\zeta)}
   \overline{z}_1^{-i\zeta-1}w_1^{i\zeta-1}
  d\zeta
\end{equation*}
 maps
 $L^2(D_{\beta})$ to $L^2_{-1}(D_{\beta})$.
 Similarly, the proof of Proposition \ref{mappingprop} shows that
 the operator with kernel
\begin{equation*}
\frac{1}{2\pi} \frac{1}{\overline{z}_2 w_2}
 \int_{\mathbb{R}}
 \frac{\zeta}{\sinh(\pi\zeta)}g(\zeta-ik,z_1)
 z_1^{-i\zeta/2-1}\overline{z}_1^{-i\zeta/2}
  w_1^{i\zeta-1}
  d\zeta
 \end{equation*}
 occurring in (\ref{l22})
maps $L^2(D_{\beta})$ to $L^2_{-1}(D_{\beta})$. 
 We have estimates for the term in \eqref{l22} when $f\in
 W_{\mathscr{D}}^s(D_{\beta})$:
\begin{align*}
 \sum_{\alpha \le k} \left\| |z_1|^{-k+\alpha} \Lambda_t^{\alpha}
 f_{-1}\right.&\left.\right\|_{L^2(D_{\beta})}\\
  \lesssim& \sum_{\alpha \le k} \left\| |z_1|^{-k+\alpha} \Lambda_t^{\alpha}
 f_{-1}\right\|_{L^2((\mathbb{D}_1\times\mathbb{C})\cap D_{\beta})} +
 \|f_{-1}\|_{W^k(D_{\beta})}\\
\lesssim& \sum_{\alpha \le k} \left\| t_1^{-k+\alpha}
\Lambda_t^{\alpha}
 f_{-1}\right\|_{L^2((\mathbb{D}_1\times\mathbb{C})\cap D_{\beta})} +
 \|f_{-1}\|_{W^k(D_{\beta})}\\
 \lesssim&
 \|f_{-1}\|_{W^k(D_{\beta})},
\end{align*}
where $\mathbb{D}_1:= \{|z_1|\le 1\}$, the variable $t_1$ is as in
 \eqref{coortd},
and the last step follows from Theorem 1.4.4.4 in \cite{Gr} 
(with a slight variation in the argument we can also
 apply Theorem 11.8 in \cite{LiMa} which holds for smooth domains).

 Then,
 together (\ref{l21}) and (\ref{l22}) show
\begin{align*}
\left\| \frac{\partial^k}{\partial w_1^k} {\bf T}_{-1} f
\right\|_{L^2(D_{\beta})} \lesssim& \|f_{-1}\|_{W^k(D_{\beta})}
 +  \sum_{\alpha \le k} \left\| |z_1|^{-k+\alpha} \Lambda_t^{\alpha}
 f_{-1}\right\|_{L^2(D_{\beta})}
\\
\lesssim& \|f_{-1}\|_{W^k(D_{\beta})}.
\end{align*}
 
The estimate in (\ref{w1der}) is verified, completing the proof of
the theorem.
\end{proof}

\section{The case $j\ne -1$}
We construct operators
\begin{equation*}
{\bf T}_{j}: W^k_{\mathscr{D}}(D_{\beta}) \rightarrow
W^k_{j}(D_{\beta})
 \qquad \forall k,
\end{equation*}
for the cases $j\ne -1$ as follows.

We let $Q_j$ be the projection from
 $L^2(D_{\beta})$ to $L^2_j(D_{\beta})$ given by
\begin{equation*}
 Q_jf (z_1,z_2)=
  \frac{1}{2\pi} \int_{-\pi}^{\pi}
   f(z_1, e^{i\theta}z_2)e^{-ij\theta}d\theta.
 \end{equation*}
Then we take the operator ${\bf T}_j$ to be given by
\begin{equation*}
{\bf T}_j f= w_2^{j+1} {\bf T}_{-1} (z_2^{-j-1} Q_j f).
\end{equation*}
For each ${\bf T}_j$, due to properties of the operator ${\bf
T}_{-1}$, we have a theorem similar to
 Theorem \ref{projthrm}:
\begin{thrm}
 \label{tj}
 Let $\beta>\pi/2$, and $D_{\beta}$ be defined as above.
  For all $ j\in \mathbb{Z}$ there exists a bounded linear
  projection
\begin{equation*}
{\bf T}_j: L^2(D_{\beta}) \rightarrow B_{j}(D_{\beta})
\end{equation*}
which satisfies
\begin{equation*}
{\bf T}_j: W^k_{\mathscr{D}}(D_{\beta}) \rightarrow
W^k_{j}(D_{\beta})
 \qquad \forall k,
\end{equation*}
and
\begin{equation*}
\|{\bf T}_{j}f\|_{W^k_{j}(D_{\beta})}
 \lesssim \|f\|_{W^k(D_{\beta})}.
\end{equation*}
\end{thrm}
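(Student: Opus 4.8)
The plan is to reduce Theorem \ref{tj} entirely to Theorem \ref{projthrm} by conjugating ${\bf T}_{-1}$ with the rotational projection $Q_j$ and with the multiplication operators $m_a\colon f\mapsto z_2^a f$ ($a\in\mathbb{Z}$), so that ${\bf T}_j=m_{j+1}\,{\bf T}_{-1}\,m_{-j-1}\,Q_j$. The first thing I would record is the elementary fact that on $D_\beta$ one has $e^{-(\beta-\pi/2)}<|z_2|^2<e^{\beta-\pi/2}$; consequently $z_2^a$ and all its derivatives are bounded on $D_\beta$, so $m_a$ is bounded on $L^2(D_\beta)$ and on every $W^k(D_\beta)$, it maps $\mathscr{D}=C^\infty_0(D_\beta)$ into itself and hence $W^k_{\mathscr{D}}(D_\beta)$ into itself, and it carries holomorphic functions to holomorphic functions while shifting the rotational weight by $a$ (so $m_a\colon L^2_\ell(D_\beta)\to L^2_{\ell+a}(D_\beta)$ and $m_a\colon B_\ell(D_\beta)\to B_{\ell+a}(D_\beta)$).

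Next I would establish the needed properties of $Q_j$. A change of variable $\theta\mapsto\theta+\phi$ in the defining integral shows that $Q_j$ is precisely the orthogonal projection of $L^2(D_\beta)$ onto $L^2_j(D_\beta)$ — in particular it is the identity on $L^2_j(D_\beta)$ and annihilates $L^2_\ell(D_\beta)$ for $\ell\ne j$ — and that $Q_j m_{j+1}=m_{j+1}Q_{-1}$, i.e. $m_{-j-1}Q_j m_{j+1}=Q_{-1}$. Differentiating under the integral sign, using $\partial_{z_2}(f\circ\rho_\theta)=e^{i\theta}(\partial_{z_2}f)\circ\rho_\theta$ and $\partial_{\bar z_2}(f\circ\rho_\theta)=e^{-i\theta}(\partial_{\bar z_2}f)\circ\rho_\theta$ (while $\partial_{z_1},\partial_{\bar z_1}$ commute with $\rho_\theta$), gives $\partial_{z_2}Q_j=Q_{j-1}\partial_{z_2}$ and $\partial_{\bar z_2}Q_j=Q_{j+1}\partial_{\bar z_2}$, whence $\|Q_jf\|_{W^k}\le\|f\|_{W^k}$ since each $Q_\ell$ has $L^2$-operator norm $\le 1$. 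Finally, since $D_\beta$ is $\rho_\theta$-invariant, $\mathrm{supp}(Q_j\phi)\subset\bigcup_\theta\rho_\theta(\mathrm{supp}\,\phi)$ is compact in $D_\beta$ for $\phi\in\mathscr{D}$, so $Q_j$ maps $\mathscr{D}$ into itself and, being $W^k$-bounded, maps $W^k_{\mathscr{D}}(D_\beta)$ into itself.

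With these facts in hand the argument is short. For $f\in L^2(D_\beta)$ the function $m_{-j-1}Q_jf$ lies in $L^2_{-1}(D_\beta)$, so ${\bf T}_{-1}m_{-j-1}Q_jf\in B_{-1}(D_\beta)$ by Theorem \ref{projthrm}, and hence ${\bf T}_jf=m_{j+1}{\bf T}_{-1}m_{-j-1}Q_jf\in B_j(D_\beta)$; boundedness $L^2(D_\beta)\to B_j(D_\beta)$ follows by composing the bounds above. For idempotency I would use that ${\bf T}_{-1}m_{-j-1}Q_jf\in B_{-1}(D_\beta)\subset L^2_{-1}(D_\beta)$, on which $Q_{-1}$ acts as the identity; then $m_{-j-1}Q_j m_{j+1}=Q_{-1}$ together with the idempotency of ${\bf T}_{-1}$ give ${\bf T}_j^2=m_{j+1}{\bf T}_{-1}(m_{-j-1}Q_j m_{j+1}){\bf T}_{-1}m_{-j-1}Q_j=m_{j+1}{\bf T}_{-1}Q_{-1}{\bf T}_{-1}m_{-j-1}Q_j=m_{j+1}{\bf T}_{-1}^2 m_{-j-1}Q_j={\bf T}_j$. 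That ${\bf T}_j$ is onto $B_j(D_\beta)$ I would check directly: if $g\in B_j(D_\beta)$ then $Q_jg=g$, $m_{-j-1}g\in B_{-1}(D_\beta)$, ${\bf T}_{-1}m_{-j-1}g=m_{-j-1}g$ (since ${\bf T}_{-1}={\bf P}_{-1}+{\bf H}$ is the identity on $B_{-1}(D_\beta)$, by Theorem \ref{proph}$(ii)$), so ${\bf T}_jg=m_{j+1}m_{-j-1}g=g$.

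The Sobolev statement then falls out by chaining norm estimates: for $f\in W^k_{\mathscr{D}}(D_\beta)$ one has $Q_jf\in W^k_{\mathscr{D}}(D_\beta)$, hence $m_{-j-1}Q_jf\in W^k_{\mathscr{D}}(D_\beta)$ with $\|m_{-j-1}Q_jf\|_{W^k}\lesssim\|f\|_{W^k}$; Theorem \ref{projthrm} then gives ${\bf T}_{-1}m_{-j-1}Q_jf\in W^k_{-1}(D_\beta)$ with $\|{\bf T}_{-1}m_{-j-1}Q_jf\|_{W^k}\lesssim\|m_{-j-1}Q_jf\|_{W^k}\lesssim\|f\|_{W^k}$; and applying $m_{j+1}$ lands in $W^k_j(D_\beta)$ with $\|{\bf T}_jf\|_{W^k_j}\lesssim\|f\|_{W^k}$. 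I do not expect a genuine obstacle in this argument: all the analytic work is already contained in Theorems \ref{proph} and \ref{projthrm}, and what is left is the bookkeeping that the rotational average $Q_j$ and the multiplications $m_{\pm(j+1)}$ are harmless on $L^2$, on $W^k_{\mathscr{D}}$, and on the weighted subspaces — which is where the pinching $e^{-(\beta-\pi/2)}<|z_2|^2<e^{\beta-\pi/2}$ on $D_\beta$ and the $\rho_\theta$-invariance of $D_\beta$ enter.
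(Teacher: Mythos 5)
Your proposal is correct and follows essentially the same route as the paper, which defines ${\bf T}_j f= w_2^{j+1}\,{\bf T}_{-1}(z_2^{-j-1}Q_jf)$ and simply asserts that the theorem follows from the properties of ${\bf T}_{-1}$. The verifications you supply --- the pinching of $|z_2|$ on $D_\beta$, the identities $Q_jm_{j+1}=m_{j+1}Q_{-1}$ and $\partial_{z_2}Q_j=Q_{j-1}\partial_{z_2}$, and the preservation of $\mathscr{D}$ and of $W^k_{\mathscr{D}}$ --- are exactly the bookkeeping the paper leaves implicit, and they are all sound.
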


This proves the Main Theorem.

\section{Remarks}
We end with a few remarks.  We first note that in our proof of
Theorem \ref{projthrm}, we worked with Sobolev spaces, $W^k$ for
integer $k$.  The general case for all $s\ge 0$ follows by
interpolation.

Secondly, there are infinitely many projection operators which
have the same regularity properties as our constructed projection
in the Main Theorem.  Other projections can be constructed for
instance by changing the factor $\tau_{k}(\xi)$ in Section
\ref{correct}
 with the replacement of the term $e^{-\xi^2}$ with another
 $e^{-m\xi^2}$ for any positive integer $m$.  Then the rest of the
 arguments could be followed verbatim.

 By \cite{Ba95}, if the Bergman projection were to map
 $C^{\infty}_0 (\overline{D_{\beta}})$ continuously into
   $C^{\infty} (\overline{D_{\beta}})$ (it does not)
   then we would automatically have continuity from the larger
   space $C^{\infty} (\overline{D_{\beta}})$ as well.  Thus,
 it would be of interest to find an improvement to the
 projection,
 along the lines presented here,
 which preserves $W^s_j(D_{\beta})$ for all $s\ge 0$.

We lastly note that, while it would be ideal to obtain an
 operator which would map $W^s$ to itself, without the restriction
 to the space $W^s_j$, by summing the operators in Main Theorem
 \ref{mainproj} over $j$, the dependence of the norms in
 Theorem \ref{tj} on $j$ prohibit
 the convergence of such a summation.
Following the calculations of the proof of Proposition
\ref{mappingprop}
 leads to the estimates for the norms of ${\bf T}_j$:
\begin{equation*}
\|T_j\|\lesssim \frac{\sinh[(j+1)(\beta-\pi/2)]}{j+1}.
\end{equation*}

This exponential growth of the estimates thus prohibits us from
using results such as the Cotlar--Stein almost orthogonality lemma
to conclude any
 convergence of a sum over the operators ${\bf T}_j$.

\bigskip

\end{document}